\numberwithin{equation}{section}
\theoremstyle{definition}
\newtheorem{df}{Definition}[section]
\newtheorem{lem}[df]{Lemma}
\newtheorem{prop}[df]{Proposition}
\newtheorem{rmk}[df]{Remark}
\newtheorem{thm}[df]{Theorem}
\def\R{\mathbb{R}}
\def\I{\mathcal{I}}
\def\J{\mathcal{J}}
\def\K{\mathcal{K}}
\def\T{\mathcal{T}}
\def\D{\mathscr{D}}
\def\O{\mathscr{O}}
\def\div{{\rm{div}}\,}
\begin{document}
\title{\bf{Attainability of a stationary Navier-Stokes flow 
around a rigid body rotating from rest}}
\author{By\\Tomoki T\small{AKAHASHI}\\(Nagoya University, Japan)}
\date{}
\maketitle

\noindent{\bf Abstract.}~ We consider the large time 
behavior of 
the three-dimensional 
Navier-Stokes flow around a rotating rigid body. 
Assume that the angular velocity of the body gradually increases 
until it reaches a small terminal one at a certain finite time and 
it is fixed afterwards. 
We then show that the fluid motion converges 
to a steady solution as time $t\rightarrow \infty$. 
\par{\it Key Words and Phrases.}\quad Navier-Stokes flow, 
rotating obstacle, attainability, starting problem, steady flow, 
evolution operator 

\par 2020 {\it Mathematics Subject Classification Numbers.}
\quad 35Q30, 76D05 
\section{Introduction}\label{intro}
~~~We consider the large time behavior of a 
viscous incompressible flow 
around a rotating rigid body in $\R^3$.  
Assume that both a compact rigid body $\O$ 
and a viscous incompressible fluid 
which occupies the outside of $\O$ are initially at rest; then, 
the body starts to rotate with the angular velocity
which gradually increases until it reaches a small terminal one 
at a certain finite time and it is fixed afterwards. 
We then show that the fluid motion converges 
to a steady solution obtained by Galdi \cite{galdi2003} 
as time $t\rightarrow \infty$ 
(Theorem \ref{thm1} in Subsection \ref{maintheorem2}). 
This was conjectured by Hishida \cite[Section 6]{hishida2013}, 
but it has remained open. 
Such a question is called the starting problem and 
it was originally raised by 
Finn \cite{finn1965}, in which rotation was 
replaced by translation of the body.  
Finn's starting problem was first 
studied by Heywood \cite{heywood1972}; since his paper, 
a stationary solution is said to be attainable 
if the fluid motion converges to it as $t\rightarrow \infty$.  
Later on, by using Kato's approach \cite{kato1984} 
(see also Fujita and Kato \cite{fuka1964}) together with  
the $L^q$-$L^r$ estimates for the 
Oseen equation established by Kobayashi and Shibata \cite{kosh1998}, 
Finn's starting problem was completely solved 
by Galdi, Heywood and Shibata \cite{gahesh1997}. 
\par Let us introduce the mathematical formulation. 
Let $\O\subset \R^3$ be a compact and connected set 
with non-empty interior. 
The motion of $\O$ mentioned above is described 
in terms of the angular velocity 
\begin{align}
\omega(t)=\psi(t)\omega_0,\quad \omega_0=(0,0,a)^{\top}
\end{align}
with a constant $a\in \R$, where $\psi$ is a 
function on $\R$ satisfying the following conditions:
\begin{align}\label{psidef}
\psi\in C^1(\R;\R),\quad 
|\psi(t)|\leq 1~~{\rm{for}}~t\in\R,
~~\psi(t)=0~~{\rm{for}}~t\leq 0,~~
\psi(t)=1~~{\rm{for}} ~t\geq 1. 
\end{align}
Here and hereafter, $(\cdot)^\top$ denotes the transpose. 
Then the domain occupied by the fluid can be expressed as 
$D(t)=\{y=O(t)x;\,x\in D\}$, where 
$D=\R^3\setminus \O$ is assumed to be an exterior domain with smooth 
boundary $\partial D$ and 
\begin{align*}  
O(t)=\left(
\begin{array}{ccc}
\cos\Psi(t)&-\sin\Psi(t)&0\\
\sin\Psi(t)&\cos\Psi(t)&0\\
0&0&1
\end{array}\right),\quad \Psi(t)=\int_0^t \psi(s)a\,ds.
\end{align*}
We consider the initial boundary value problem 
for the Navier-Stokes equation
\begin{align}\label{NS1}
\left\{
\begin{array}{r@{}c@{}ll}
\partial_t w+w\cdot \nabla_y w
&{}={}&\Delta_y w-\nabla_y \pi,&y\in D(t),t>0,\\
\nabla_y\cdot w&{}={}&0,&y\in D(t),t\geq 0,\\
w|_{\partial D(t)}&{}={}&\psi(t)\omega_0\times y,&t\geq 0,\\
w(y,t)&{}\rightarrow{}&0&{\rm{as}}~|y|\rightarrow \infty,\\
w(y,0)&{}={}&0,&y\in D,
\end{array}\right.
\end{align}
where $w=(w_1(y,t),w_2(y,t),w_3(y,t))^\top$ 
and $\pi=\pi(y,t)$ denote  unknown velocity and 
pressure of the fluid, respectively. 
To reduce the problem to an equivalent one 
in the fixed domain $D$, 
we take the frame $x=O(t)^\top y$ attached to the body and make the 
change of the unknown functions:
$u(x,t)=O(t)^\top w(y,t),~p(x,t)=\pi(y,t)$. 
Then the problem (\ref{NS1}) is reduced to
\begin{align}\label{NS2}
\left\{
\begin{array}{r@{}c@{}ll}
\partial_t u+u\cdot \nabla u
&{}={}&\Delta u+(\psi(t)\omega_0\times x)\cdot \nabla u
-\psi(t)\omega_0\times u-\nabla p,
&x\in D,t>0,\\
\nabla\cdot u&{}={}&0,\hspace{2cm} x\in D,t\geq 0,\\
u|_{\partial D}&{}={}&\psi(t)\omega_0\times x,\hspace{0.3cm}t\geq 0,\\
u&{}\rightarrow{}&0\hspace{2.2cm}{\rm{as}}~|x|\rightarrow \infty,\\
u(x,0)&{}={}&0,\hspace{2cm} x\in D.
\end{array}\right.
\end{align}
\par The purpose of this paper 
is to show that (\ref{NS2}) 
admits a global solution which tends to a solution $u_s$ 
for the stationary problem
\begin{align}\label{sta}
\left\{
\begin{array}{r@{}c@{}ll}
u_s\cdot \nabla u_s&{}={}&\Delta u_s+(\omega_0\times x)\cdot \nabla u_s
-\omega_0\times u_s-\nabla p_s,&x\in D,\\
\nabla\cdot u_s&{}={}&0,\hspace{1.3cm}x\in D,\\
u_s|_{\partial D}&{}={}&\omega_0\times x,\\
u_s&{}\rightarrow{}&0\hspace{1.5cm}{\rm{as}}~|x|\rightarrow \infty.
\end{array}\right.
\end{align}
The rate of convergence in $L^r$ with 
$r\in (3,\infty]$ is also studied. 
In \cite{galdi2003}, Galdi successfully proved that 
if $|\omega_0|$ is sufficiently small, 
problem (\ref{sta}) has a 
unique smooth solution $(u_s,p_s)$ with pointwise estimates
\begin{align}\label{pointwise}
|u_s(x)|\leq \frac{C|\omega_0|}{|x|},\quad |\nabla u_s(x)|+|p_s(x)|
\leq \frac{C|\omega_0|}{|x|^2}.
\end{align}
We note that the decay rate (\ref{pointwise}) 
is scale-critical, which is also captured in terms of 
the Lorentz space (weak-Lebesgue space) $L^{3,\infty}$. 
This was in fact done by Farwig and Hishida \cite{fahi2007} 
even for the external force being in 
a Lorentz-Sobolev space of order $(-1).$ 
\par Let us mention some difficulties of our problem 
and how to overcome them in this paper. 
In \cite{gahesh1997}, 
the $L^q$-$L^r$ estimates for the Oseen semigroup play an important role. 
In the rotational case with constant angular velocity, 
Hishida and Shibata \cite{hish2009} 
also established the $L^q$-$L^r$ estimates of 
the semigroup generated by the Stokes operator 
with the additional term $(\omega_0\times x)\cdot \nabla-\omega_0\times$. 
If we use this semigroup as in \cite{gahesh1997}, 
we have to treat the term 
$(\psi(t)-1)(\omega_0\times x)\cdot\nabla v$, which is however 
no longer perturbation from the semigroup on account of 
the unbounded coefficient $\omega_0\times x$, where $v=u-\psi(t)u_s$. 
In this paper, 
we make use of the evolution operator $\{T(t,s)\}_{t\geq s\geq 0}$ on 
the solenoidal space  
$L^q_{\sigma}(D)~(1<q<\infty$), 
which is a solution operator to the linear problem 
\begin{align}\label{EV1}
\left\{
\begin{array}{r@{}c@{}ll}
\partial_t u&{}={}&\Delta u+(\psi(t)\omega_0\times x)\cdot 
\nabla u-\psi(t)\omega_0\times u-\nabla p,&x\in D,t>s,\\
\nabla\cdot u&{}={}&0,\hspace{0.3cm} x\in D,t\geq s,\\
u|_{\partial D}&{}={}&0,\hspace{0.3cm} t> s,\\
u&{}\rightarrow{}&0\hspace{0.45cm}{\rm{as}}~|x|\rightarrow \infty,\\
u(x,s)&{}={}&f,\hspace{0.3cm} x\in D.
\end{array}\right.
\end{align}
Hansel and Rhandi \cite{harh2014} succeeded in the proof of generation of 
this evolution operator with the $L^q$-$L^r$ smoothing rate. 
They constructed the evolution operator in their own way since the 
corresponding semigroup is not analytic 
(Hishida \cite{hishida1999}, Farwig and Neustupa \cite{fane2007}). 
Recently, Hishida \cite{hishida2018,hishidapre} 
developed the $L^q$-$L^r$ decay estimates of the 
evolution operator, see Section 3. With those estimates, 
we solve the integral equation which perturbation 
from the stationary solution $u_s$ obeys. 
However, it is difficult to perform analysis 
with the standard Lebesgue space 
on account of the scale-critical pointwise estimates (\ref{pointwise}).  
Thus, we first construct a solution for the weak formulation  
in the framework of Lorentz 
space by the strategy due to Yamazaki \cite{yamazaki2000}. 
We next identify this solution with a local solution 
possessing better regularity in a neighborhood of each time. 
The later procedure is actually adopted 
by Kozono and Yamazaki \cite{koya1998}. 
Furthermore, we derive the $L^{\infty}$ decay which is not observed 
in \cite{gahesh1997}. 
When the stationary solution possesses 
the scale-critical rate $O(1/|x|)$,   
Koba \cite{koba2017} first derived the $L^{\infty}$ decay 
of perturbation with less sharp rate in 
the context of stability analysis, see also Remark \ref{rmk3}. 
Although he used both the $L^1$-$L^r$ estimates of 
the Oseen semigroup $T(t)$ and 
the $L^q$-$L^r$ estimates (yielding the $L^q$-$L^\infty$ estimates) 
of the composite 
operator $T(t)P\div$, where $P$ denotes the Fujita-Kato projection 
(see Subsection \ref{notation}),  
it turns out that either of them is enough to accomplish the proof. 
In this paper, we employ merely the $L^1$-$L^r$ estimates of the 
adjoint evolution operator $T(t,s)^*$ to simplify the argument. 
\par The paper is organized as follows. 
In Section 2 we introduce the notation and give the main theorems. 
Section 3 is devoted to some preliminary results 
on the stationary problem and the evolution operator. 
In Section 4 we give the proof of the main theorems.

\section{Main theorems} \label{maintheorem}
~~~In this section, we first introduce some notation and 
after that, we give our main theorems.
\subsection{Notation}\label{notation}
~~~We introduce some function spaces. 
Let $D\subset \R^3$ be an exterior domain with smooth boundary. 
By $C^{\infty}_0(D)$, we  
denote the set of all $C^{\infty}$ functions 
with compact support in $D$. For $1\leq q\leq \infty$    
and nonnegative integer $m$, $L^q(D)$ and 
$W^{m,q}(D)$ denote the standard Lebesgue and Sobolev spaces, respectively. 
We write the $L^q$ norm as $\|\cdot\|_q$. 
The completion of $C_0^\infty(D)$ in $W^{m,q}(D)$ is 
denoted by $W_0^{m,q}(D)$. 
Let $1<q<\infty$ and $1\leq r\leq \infty$. 
Then the Lorentz spaces $L^{q,r}(D)$ are defined by
\begin{align*}
L^{q,r}(D)=\{f:{\rm{Lebesgue ~measurable~ function}}
\mid\|f\|^*_{q,r}<\infty\},
\end{align*}
where                
\begin{align*}
\|f\|^*_{q,r}=
\begin{cases}
\left(\displaystyle\int_0^{\infty}\big(t
\mu(\{x\in D\mid|f(x)|>t\})^{\frac{1}{q}}\big)^r\frac{dt}{t}\right)
^{\frac{1}{r}}
&1\leq r<\infty,\\
\displaystyle\sup_{t>0}
t\mu(\{x\in D\mid|f(x)|>t\})^{\frac{1}{q}}&r=\infty
\end{cases}
\end{align*}
and $\mu(\cdot)$ denotes the Lebesgue measure on $\R^3$. 
The space $L^{q,r}(D)$ is a 
quasi-normed space and it is even 
a Banach space equipped with 
norm $\|\cdot\|_{q,r}$ equivalent to $\|\cdot\|^*_{q,r}$. 
The real interpolation functor 
is denoted by $(\cdot,\cdot)_{\theta,r}$, 
then we have 
\begin{align*}
L^{q,r}(D)=\left(L^{q_0}(D),L^{q_1}(D)\right)_{\theta,r},
\end{align*}
where $1\leq q_0<q<q_1\leq \infty$ and $0<\theta<1$ 
satisfy $1/q=(1-\theta)/q_0+\theta/q_1$, while 
$1\leq r\leq \infty$, see Bergh-L\"{o}fstr\"{o}m \cite{belobook1976}. 
We note that if $1\leq r<\infty$, the dual of the space 
$L^{q,r}(D)$ is $L^{q/(q-1),r/(r-1)}(D)$. 
It is well known that if $1\leq r<\infty$,
the space $C^{\infty}_{0}(D)$ is dense in $L^{q,r}(D)$, while 
the space $C^{\infty}_{0}(D)$ is not dense in $L^{q,\infty}(D)$. 
\par We next introduce some solenoidal function spaces. 
Let $C_{0,\sigma}^\infty(D)$ be the 
set of all $C_0^{\infty}$-vector fields $f$ which satisfy $\div f=0$ 
in $D$. For $1<q<\infty$, 
$L^{q}_{\sigma}(D)$ denote the 
completion of $C_{0,\sigma}^\infty(D)$ in $L^q(D)$. 
For every $1<q<\infty$, we have the following Helmholtz decomposition:
\begin{align*}
L^q(D)=L^q_{\sigma}(D)\oplus\{\nabla p\in L^q(D)\mid 
p\in L^q_{\rm{loc}}(\overline{D})\},
\end{align*}
see Fujiwara and Morimoto \cite{fumo1977}, 
Miyakawa \cite{miyakawa1982}, and Simader and Sohr \cite{siso1992}. 
Let $P_q$ denote the Fujita-Kato projection 
from $L^q(D)$ onto $L^q_{\sigma}(D)$ 
associated with the decomposition. We remark that the adjoint operator 
of $P_q$ coincides 
with $P_{q/(q-1)}$. We simply write $P=P_q$\,. By real interpolation, 
it is possible to extend $P$ to a bounded operator on $L^{q,r}(D)$. 
We then define the solenoidal Lorentz spaces 
$L^{q,r}_{\sigma}(D)$ by
\begin{align*}
L^{q,r}_{\sigma}(D)=PL^{q,r}(D)=
\left(L^{q_0}_\sigma(D),L^{q_1}_\sigma(D)\right)_{\theta,r},
\end{align*}
where $1<q_0<q<q_1<\infty$ and $0<\theta<1$ 
satisfy $1/q=(1-\theta)/q_0+\theta/q_1$, 
while $1\leq r\leq \infty$, see Borchers and Miyakawa \cite{bomi1995}. 
We then have the duality relation 
$L^{q,r}_{\sigma}(D)^*=L_{\sigma}^{q/(q-1),r/(r-1)}(D)$ 
for $1<q<\infty$ and $1\leq r<\infty$. 
We denote various constants by $C$ 
and they may change from line to line. 
The constant dependent on $A,B,\cdots$ is denoted by $C(A,B,\cdots)$. 
Finally, if there is no confusion, 
we use the same symbols for denoting spaces of scalar-valued functions and 
those of vector-valued ones.   

\subsection{Main theorems}\label{maintheorem2}
~~~It is reasonable to look for a solution to (\ref{NS2}) of the form 
\begin{align*}
u(x,t)=v(x,t)+\psi(t)u_s,\quad p(x,t)=\phi(x,t)+\psi(t)p_s.
\end{align*}
Then the perturbation $(v,\phi)$ 
satisfies the following initial boundary value problem
\begin{align}
\left\{
\begin{array}{r@{}c@{}ll}
\partial_t v&{}={}&\Delta v+(\psi(t)\omega_0\times x)\cdot\nabla v
-\psi(t)\omega_0\times v\\
&&\hspace{4.3cm}+(Gv)(x,t)+H(x,t)-\nabla \phi,
&x\in D,t>0,\\
\nabla\cdot v&{}={}&0,\hspace{0.3cm} x\in D,t\geq 0,\\
v|_{\partial D}&{}={}&0,\hspace{0.3cm}t> 0,\label{a}\\
v&{}\rightarrow{}&0\hspace{0.47cm}{\rm{as}}~|x|\rightarrow \infty,\\
v(x,0)&{}={}&0,\hspace{0.3cm} x\in D,
\end{array}\right.
\end{align}
where 
\begin{align}
&(Gv)(x,t)=-v\cdot\nabla v-\psi(t)v\cdot\nabla u_s-\psi(t)u_s\cdot \nabla v,
\label{g}\\
&H(x,t)=\psi(t)(\psi(t)-1)\{-u_s\cdot \nabla u_s-\omega_0
\times u_s+(\omega_0\times x)\cdot\nabla u_s\}
-\psi'(t)u_s.\label{h}
\end{align}
In what follows, we concentrate ourselves 
on the problem (\ref{a}) instead of (\ref{NS2}). 
In fact, if we obtain the solution $v$ of (\ref{a}) 
which converges to $0$ as $t\rightarrow \infty$, 
the solution $u$ of (\ref{NS2}) converges to $u_s$ as 
$t\rightarrow \infty$. 
By using the evolution operator $\{T(t,s)\}_{t\geq s\geq 0}$ 
on $L^q_{\sigma}(D)~(1<q<\infty$) associated with (\ref{EV1}), 
problem (\ref{a}) is converted into
\begin{align}\label{integraleq}
v(t)=&\int_0^t T(t,\tau)P[(Gv)(\tau)+H(\tau)]\,d\tau.
\end{align}
\par We are now in a position to give our attainability theorem.
\begin{thm}\label{thm1}
Let $\psi$ be a function on $\R$ satisfying (\ref{psidef}) and 
put $\alpha:=\displaystyle\max_{t\in\R}|\psi'(t)|.$ 
For $q\in (6,\infty)$, there exists a constant $\delta(q)>0$ 
such that if $(\alpha+1)|a|\leq \delta,$ 
problem (\ref{integraleq}) admits a 
solution $v$ which possesses the following properties:
\begin{align*}
({\rm{i}})\,
v\in BC_{w^*}\big((0,\infty);L^{3,\infty}_{\sigma}(D)\big),~\,
\|v(t)\|_{3,\infty}\rightarrow 0~\, {\rm{as}}~ t\rightarrow 0,~\,
\sup_{0<t<\infty}\|v(t)\|_{3,\infty}\leq C(\alpha+1)|a|,
\end{align*}
where $BC_{w^*}(I;X)$  
is the set of bounded and weak-$\ast$ continuous  
functions on the interval $I$ with values in $X$, the constant $C$  
is independent of $a$ and $\psi$;
\begin{align*}
&({\rm{ii}})\,v\in C\big((0,\infty);L^r_{\sigma}(D)\big)
\cap C_{w^*}\big((0,\infty);L^\infty(D)\big),~ 
\nabla v\in C_w\big((0,\infty);L^r(D)\big)~{\rm for}~r\in (3,\infty);\\
&({\rm{iii}})\,({\rm{Decay}})\quad 
\|v(t)\|_{r}=O(t^{-\frac{1}{2}+\frac{3}{2r}})\quad 
{\rm{as}}~t\rightarrow \infty
\quad {\rm{for~all~}} r\in(3,q),\\
&\hspace{2.585cm}\|v(t)\|_{q,\infty}
=O(t^{-\frac{1}{2}+\frac{3}{2q}})\quad 
{\rm{as}}~t\rightarrow \infty,\\
&\hspace{2.585cm}\|v(t)\|_{r}=O(t^{-\frac{1}{2}+\frac{3}{2q}})
\quad {\rm{as}}~t\rightarrow \infty
\quad {\rm{for~all~}} r\in(q,\infty].
\end{align*}
\end{thm}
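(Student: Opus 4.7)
The plan is to reformulate the integral equation \eqref{integraleq} as a fixed-point problem in the scale-critical Lorentz space $L^{3,\infty}_\sigma(D)$, following Yamazaki's strategy for the steady-state stability problem in exterior domains. Since $L^{3,\infty}_\sigma$ is not separable and $C^\infty_{0,\sigma}$ fails to be dense, I would interpret the identity weakly: test \eqref{integraleq} against $\varphi \in L^{3/2,1}_\sigma(D)$ and transfer $T(t,\tau)$ onto the test function via the adjoint evolution operator $T(t,\tau)^*$. The fixed-point set is
\[
X_K = \{v \in BC_{w^*}((0,\infty); L^{3,\infty}_\sigma(D)) : \sup_{t>0}\|v(t)\|_{3,\infty} \leq K\}
\]
with $K$ of order $(\alpha+1)|a|$, and the map is $v \mapsto \Phi v$ where $\Phi v(t) := \int_0^t T(t,\tau) P[(Gv)(\tau)+H(\tau)]\,d\tau$ understood in the weak sense above.

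For the forcing term I would use that $H$ is supported in $\tau \in [0,1]$: the first three contributions carry the factor $\psi(\tau)(\psi(\tau)-1)$, which vanishes for $\tau \geq 1$, and the last contribution $\psi'(\tau)u_s$ has $\psi'$ supported in $[0,1]$. This localization is the raison d'\^etre of the ansatz $u = v + \psi u_s$, since it defuses the unbounded coefficient in the term $(\omega_0\times x)\cdot\nabla u_s$. The pointwise bounds \eqref{pointwise} give $u_s$, $\nabla u_s \in L^{3,\infty}$-type classes with norm $\lesssim |a|$, and combining with the $L^q$-$L^r$ decay of $T(t,\tau)$ recalled in Section 3 produces $\sup_t \|\int_0^t T(t,\tau) PH(\tau)d\tau\|_{3,\infty} \leq C(\alpha+1)|a|$. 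The nonlinear/linear terms in $Gv$ are rewritten as divergences and paired against $\varphi$, so that one invokes the adjoint estimates for $T(t,\tau)^*$ acting on $\nabla \varphi$. Duality $L^{3/2,1}_\sigma{}^* = L^{3,\infty}_\sigma$ together with H\"older in Lorentz spaces and a scale-critical time integral (the relevant integrand behaves like $\tau^{-1}$-type, borderline integrable after interpolation with the decay gain) yields $\|\Phi v(t)\|_{3,\infty} \leq C(\alpha+1)|a| + C|a|K + CK^2$. Smallness of $(\alpha+1)|a|$ then delivers both self-mapping and contraction, producing a unique fixed point $v$ in $X_K$ with the continuity and decay-as-$t\to 0$ claims of (i) essentially by construction (the integral $\int_0^t$ tends to zero and $\Phi v$ is weak-$\ast$ continuous).

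To get (ii) and (iii), I would follow Kozono–Yamazaki and solve the same integral equation locally around each $t_0 > 0$ in a stronger norm incorporating $L^r$ for $r \in (3,q)$ and a weighted-in-time $L^{q,\infty}$ norm with weight $t^{1/2-3/(2q)}$. Uniqueness within the overlap identifies this sharper solution with the Yamazaki-type one, giving the claimed $C((0,\infty);L^r_\sigma)$ regularity and the decay rates $t^{-1/2+3/(2r)}$ (for $r\in(3,q)$) and $t^{-1/2+3/(2q)}$ (for the $L^{q,\infty}$ norm and all $L^r$ with $r \in (q,\infty]$) by a standard bootstrap using the evolution-operator $L^q$-$L^r$ estimates. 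The $L^\infty$ bound and its decay are handled by duality as in the introduction: estimate $\langle \Phi v(t), \varphi\rangle$ for $\varphi \in L^1_\sigma$ by putting the derivative onto $T(t,\tau)^*\varphi$ and invoking the $L^1$-$L^r$ estimate of the adjoint evolution operator, which avoids any recourse to the composite operator $T(t,\tau)P\,\mathrm{div}$.

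The principal obstacle is that $T(t,\tau)$ is non-autonomous and its associated semigroup (at frozen $t$) is not analytic, so the usual Fujita–Kato fractional-power machinery is unavailable; every estimate must come from the non-analytic $L^q$-$L^r$ decay of $T(t,\tau)$ and its adjoint. Keeping all norms in the Lorentz scale at the critical endpoint $L^{3,\infty}$ — where $C^\infty_{0,\sigma}$ is not dense and one cannot freely approximate — forces the weak formulation and forces one to verify integrability of every convolution exactly at the borderline, including the term $\psi(\tau) u_s \cdot \nabla v$ whose coefficient has no additional decay in time. Choosing the exponents in the $L^q$-$L^r$ estimates so that the time integrals converge, while still closing the fixed-point bound in $L^{3,\infty}$, is the delicate bookkeeping at the heart of the argument.
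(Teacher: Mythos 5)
Your proposal follows essentially the same route as the paper: a Yamazaki-type weak formulation and contraction mapping in $BC_{w^*}\big((0,\infty);L^{3,\infty}_{\sigma}(D)\big)$ (and in the weighted $L^{q,\infty}$ class) built on the scale-critical adjoint estimate $\int_0^t\|\nabla T(t,\tau)^*\varphi\|_{r,1}\,d\tau\leq C\|\varphi\|_{q,1}$, followed by a Kozono--Yamazaki-style local construction in a stronger norm and identification via uniqueness, with the $L^\infty$ decay obtained by duality from the $L^1$-$L^r$ estimates of $T(t,\tau)^*P$ --- exactly the architecture of Sections 3--4. The one point your sketch leaves implicit is that a naive application of those $L^1$-$L^r$ bounds to the cross terms $v\otimes u_s$ and $u_s\otimes v$ only yields $O(t^{-\frac{3}{2q}})$, which is too slow; to reach the stated rate $O(t^{-\frac{1}{2}+\frac{3}{2q}})$ for $r\in(q,\infty]$ one must split the time integral into $\int_0^{t/2}+\int_{t/2}^{t-1}+\int_{t-1}^{t}$ and use different Lorentz exponents on each piece, as the paper does in the treatment of $N_2$ and $N_3$.
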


\begin{rmk}
We can obtain the $L^q$ decay of $v(t)$ like 
$O(t^{-1/2+3/(2q)}\log t)$ as $t\rightarrow\infty$, 
but it is not clear whether $\|v(t)\|_q=O(t^{-1/2+3/(2q)})$ holds. 
\end{rmk}

To prove Theorem \ref{thm1}, 
the key step is to construct a solution of the weak formulation 
\begin{align}\label{NS6}
(v(t),\varphi)=\int_0^t
\big(v(\tau)\otimes v(\tau)
&+\psi(\tau)\{v(\tau)\otimes u_s+u_s\otimes v(\tau)\},
\nabla T(t,\tau)^*\varphi\big)\,d\tau\nonumber\\&
+\int_0^t(H(\tau),T(t,\tau)^*\varphi)\,d\tau,
\quad \forall \varphi\in C^{\infty}_{0,\sigma}(D)
\end{align} 
as in Yamazaki \cite{yamazaki2000}, where $T(t,\tau)^*$ denotes the 
adjoint of 
$T(t,\tau)$ and, here and in what follows, 
$(\cdot,\cdot)$ stands for various duality pairings. 
In this paper, a function $v$ is called a solution of (\ref{NS6}) if 
$v\in L^{\infty}_{\rm{loc}}\big([0,\infty);L^{3,\infty}_{\sigma}(D)\big)$ 
satisfies (\ref{NS6}) for a.e. $t$. 
By following Yamazaki's approach, we can easily see that 
the solution obtained in Theorem \ref{thm1} is unique in the small, see 
Proposition \ref{thm2}. In the following theorem, we give another result 
on the uniqueness without assuming the smallness of solutions. 
\begin{thm}\label{thm3}
Let $q\in (3,\infty).$ 
Then there exists a constant $\widetilde{\delta}>0$ 
independent of $q$ and $\psi$ 
such that if $|a|\leq \widetilde{\delta}$, 
problem (\ref{NS6}) admits at most one solution 
within the class
\begin{align*}
\big\{v\in L^{\infty}_{\rm{loc}}
\big([0,\infty);L^{3,\infty}_{\sigma}(D)\big)\cap 
L^{\infty}_{\rm{loc}}
\big(0,\infty;L^q_\sigma(D)\big)\,\big|\,
\lim_{t\rightarrow 0}\|v(t)\|_{3,\infty}=0\big\}.
\end{align*}
\end{thm}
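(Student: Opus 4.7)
Let $v_1,v_2$ be two solutions in the stated class and set $w:=v_1-v_2$. Subtracting (\ref{NS6}) from itself gives, for every $\varphi\in C^{\infty}_{0,\sigma}(D)$,
\begin{align*}
(w(t),\varphi)=\int_0^t\!\bigl(v_1\otimes w+w\otimes v_2+\psi(\tau)\{u_s\otimes w+w\otimes u_s\},\,\nabla T(t,\tau)^*\varphi\bigr)d\tau.
\end{align*}
The plan is a two-phase argument: a short-time step based on the $L^{3,\infty}$-smallness at $t=0$, followed by a forward continuation that uses the $L^q_\sigma$-hypothesis.

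\textbf{Phase 1 (uniqueness on $[0,T_0]$).} Because $\|v_i(\tau)\|_{3,\infty}\to 0$ as $\tau\to 0^+$, one can select $T_0>0$ small enough that $\|v_i(\tau)\|_{3,\infty}$ is as small as we wish on $(0,T_0]$. I will then run the Yamazaki-type framework underlying Proposition \ref{thm2}: the bilinear and linear maps induced by the right-hand side are bounded on a Lorentz-in-time space of the form $L^{r}(0,T_0;L^{3,\infty}_\sigma(D))$ (the right exponent being dictated by the scale-critical kernel $(t-\tau)^{-1}$ that the $u_s$-terms produce), and, crucially, convolution with $1/t$ is bounded on this space so the $u_s$-contribution defines a genuine operator of norm $\le C\|u_s\|_{3,\infty}\le C|a|$, which is $\le 1/4$ once $\widetilde{\delta}$ is fixed small. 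Combined with the $L^{3,\infty}$-smallness of $v_i$, this forces $w\equiv 0$ on $[0,T_0]$. Note this step requires no largeness control on the $L^q$-norm.

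\textbf{Phase 2 (continuation via $L^q$).} Define $t^{*}:=\sup\{T\ge T_0:w\equiv 0\text{ on }[0,T]\}$ and suppose, towards contradiction, that $t^{*}<\infty$. By the $L^{\infty}_{\mathrm{loc}}$-hypothesis pick $\tau_0,M>0$ with $\|v_i(\tau)\|_q\le M$ for $\tau\in(t^{*},t^{*}+\tau_0]$. Because $w=0$ on $[0,t^{*}]$, the integral above reduces to integration over $[t^{*},t]$. Using H\"older in Lorentz spaces, $\|v_i\otimes w\|_{r,\infty}\le C\|v_i\|_q\|w\|_{3,\infty}$ with $1/r=1/q+1/3$, together with the divergence-form bound for $T(t,\tau)^{*}\nabla$ from Section 3, I obtain the kernel $(t-\tau)^{-\frac12-\frac{3}{2q}}$, which is integrable precisely because $q>3$. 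The $u_s$-terms are handled once more by the Yamazaki device as in Phase 1, contributing a factor $\le 1/2$. Setting $N(\eta):=\sup_{t^{*}\le t\le t^{*}+\eta}\|w(t)\|_{3,\infty}$ this gives, for $\eta\in(0,\tau_0]$,
\begin{align*}
N(\eta)\le \tfrac12\,N(\eta)+C\,M\,\eta^{\frac12-\frac{3}{2q}}\,N(\eta).
\end{align*}
Choosing $\eta$ sufficiently small (depending on $M$) forces $N(\eta)=0$, contradicting maximality of $t^{*}$. Hence $t^{*}=\infty$ and $w\equiv 0$ on $[0,\infty)$.

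\textbf{Main obstacle.} The scale-critical decay of $u_s$ forces a non-integrable kernel $(t-\tau)^{-1}$ in any direct pointwise-in-$\tau$ estimate, so an elementary Gronwall argument fails for the $u_s$-terms. The essential technical device---used identically in both phases---is Yamazaki's passage to Lorentz-in-time function spaces, where convolution with $1/t$ becomes a bounded operator, combined with the smallness $\|u_s\|_{3,\infty}\le C|a|\le C\widetilde{\delta}$ to absorb it. The separation into two phases is needed because smallness of $v_i$ is available only near $t=0$, while for $t\ge T_0$ only boundedness in $L^q$ (which is strictly stronger than $L^{3,\infty}$ for $q>3$ in the relevant convolution estimates) is assumed, and this strict strengthening is exactly what turns the critical $(t-\tau)^{-1}$ kernel for the $v_i$-contribution into the integrable kernel $(t-\tau)^{-\frac12-\frac{3}{2q}}$.
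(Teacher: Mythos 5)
Your argument is correct and is essentially the paper's own proof: a short-time step that exploits $\|v_i(t)\|_{3,\infty}\to 0$ together with the Yamazaki-type integrated estimate (\ref{intesti}) (equivalently, estimate (\ref{e1})) to absorb the scale-critical $u_s$-terms through the smallness $\|u_s\|_{3,\infty}\le C|a|\le C\widetilde{\delta}$, followed by a stepwise continuation on compact time intervals using the local $L^q$ bound and the integrable kernel $(t-\tau)^{-\frac12-\frac{3}{2q}}$ coming from (\ref{adgrad0}). The only cosmetic difference is that in the continuation step the paper measures $v_1-v_2$ in $L^q$ and treats the $u_s$-terms via $u_s\in L^q$ (so every contribution is absorbed by shrinking the time step), whereas you keep $v_1-v_2$ in $L^{3,\infty}$ and reuse the Yamazaki device for the $u_s$-terms; both variants work.
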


\begin{rmk}
Theorem \ref{thm3} asserts that if the angular velocity is small enough 
and if $\widetilde{v}$ is a solution 
within the class above which is not necessarily small, 
then it coincides with the solution obtained in Theorem \ref{thm1}. 
\end{rmk}

\section{Preliminary results}\label{preliminary}
~~~~In this section, we prepare some results on the 
stationary solutions and the evolution operator. 
For the stationary problem (\ref{sta}), Galdi \cite{galdi2003} proved the following result. 
\begin{prop}[\cite{galdi2003}]\label{galdi2003}
There exists a constant $\eta\in(0,1]$ such that if 
$|\omega_0|=|a|\leq \eta$, the stationary problem (\ref{sta}) admits a unique solution $(u_s,p_s)$ with 
the estimate
\begin{align*}
\sup_{x\in D}\big\{(1+|x|)|u_s(x)|\big\}+\sup_{x\in D}\big\{(1+|x|^2)(|\nabla u_s(x)|+|p_s(x)|)\big\}\leq C|a|,
\end{align*}
where the constant $C$ is independent of $a$.
\end{prop}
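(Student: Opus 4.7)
The plan is to reduce the inhomogeneous problem \eqref{sta} to a problem with homogeneous boundary conditions and then solve it by a fixed point argument in a function space that encodes the desired pointwise decay. First I would construct a solenoidal extension $b$ of the boundary datum $\omega_0\times x$ to all of $D$, supported in a bounded neighborhood of $\partial D$ (so that $b$ and its derivatives decay trivially), using a Bogovski\u{\i}-type correction to kill the divergence produced by multiplying $\omega_0\times x$ by a spatial cut-off. Writing $u_s=b+w$, the unknown $w$ satisfies a nonlinear system with zero boundary data, whose right-hand side depends linearly on $b$ (hence on $|a|$) plus the nonlinear term $w\cdot\nabla w$ together with the two crossed terms $b\cdot\nabla w$ and $w\cdot\nabla b$.

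The linear backbone of the argument is the resolvent problem
\begin{align*}
-\Delta u-(\omega_0\times x)\cdot\nabla u+\omega_0\times u+\nabla p=f,\qquad \nabla\cdot u=0
\end{align*}
in $\R^3$. I would analyze this via its explicit fundamental solution, which can be written as an integral over the one-parameter rotation group acting on the Oseen-like kernel; the decisive feature is that the kernel still obeys the Stokes-type pointwise bounds $|\Gamma(x,y)|\lesssim |x-y|^{-1}$ and $|\nabla\Gamma(x,y)|+|\Pi(x,y)|\lesssim |x-y|^{-2}$, so convolution with a compactly supported right-hand side yields the anisotropic weighted bounds $(1+|x|)|u(x)|+(1+|x|^2)(|\nabla u|+|p|)\leq C\|f\|$. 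The exterior-domain version is then obtained in the standard way: cut off the whole-space solution, correct the divergence by Bogovski\u{\i}, and solve a bounded-domain Stokes problem for the remainder, whose contribution is compactly supported and therefore harmless for the pointwise estimate at infinity.

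With the linear theory in hand I would set up the map $w\mapsto \Phi(w)$, where $\Phi(w)$ is the solution of the linear problem with right-hand side $-w\cdot\nabla w-b\cdot\nabla w-w\cdot\nabla b-b\cdot\nabla b-(\omega_0\times x)\cdot\nabla b+\omega_0\times b$, on the Banach space
\begin{align*}
X=\Bigl\{w\;\Big|\;\|w\|_X:=\sup_{x\in D}(1+|x|)|w(x)|+\sup_{x\in D}(1+|x|^2)|\nabla w(x)|<\infty\Bigr\},
\end{align*}
and close a contraction argument on a ball of radius $O(|a|)$ in $X$, using the algebra-type inequality that $X$ is stable under $w\otimes w$ with a loss of one factor of $(1+|x|)^{-1}$, which is exactly absorbed by the gain coming from the Stokes kernel. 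Uniqueness in the same class follows from the same contraction estimate once $|a|$ is small.

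The main obstacle I anticipate is the scale-critical nature of the target decay: the nonlinear term $u_s\cdot\nabla u_s$ has borderline behavior $|x|^{-3}$, so one cannot afford any logarithmic loss in the kernel convolution, and the unbounded drift $(\omega_0\times x)\cdot\nabla$ rules out treating the rotation terms as a small perturbation of the plain Stokes problem. Handling this demands that the fundamental solution be analyzed in a way that genuinely exploits the rotational symmetry (so that the unboundedness of the coefficient cancels against the commutator structure $(\omega_0\times x)\cdot\nabla-\omega_0\times{}$, which is the infinitesimal generator of rotations on vector fields), rather than through naive perturbation off the Oseen kernel.
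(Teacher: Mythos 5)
The paper offers no proof of this proposition: it is Galdi's theorem, imported verbatim from \cite{galdi2003} and used as a black box, so there is no in-paper argument to measure your sketch against. On its own terms, your outline follows the natural route (compactly supported solenoidal lifting of $\omega_0\times x$, pointwise estimates for the linearized whole-space problem via its fundamental solution, localization to the exterior domain, contraction in a weighted $L^\infty$ space), and this is indeed the spirit of the known proofs. But the two hardest steps are asserted rather than argued, and one of them fails as written.

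First, the bounds $|\Gamma(x,y)|\lesssim|x-y|^{-1}$ and $|\nabla\Gamma(x,y)|+|\Pi(x,y)|\lesssim|x-y|^{-2}$ for the kernel $\Gamma(x,y)=\int_0^\infty O(at)^{\top}K(O(at)x-y,t)\,dt$ are the technical heart of the whole theorem, not a remark: the obvious lower bound $|O(at)x-y|\ge \big||x|-|y|\big|$ degenerates when $y$ lies near the circular orbit of $x$, where the Gaussian is revisited at every revolution, and controlling the resulting sum over revolutions (and the far-field tails of the Oseen tensor coming from the Helmholtz projection) is exactly what a separate line of papers by Farwig, Guenther, Ne\v{c}asov\'a and Thomann is devoted to. Saying the kernel ``still obeys Stokes-type bounds'' names the conclusion without supplying the cancellation mechanism. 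Second, your fixed-point step loses a logarithm precisely where none can be afforded: with $|w|\lesssim(1+|x|)^{-1}$ and $|\nabla w|\lesssim(1+|x|)^{-2}$ the forcing $w\cdot\nabla w$ is only $O(|x|^{-3})$, and $\int_D|x-y|^{-1}|y|^{-3}\,dy\sim|x|^{-1}\log|x|$, so $\Phi$ does not map $X$ into $X$. You flag the criticality but do not resolve it; the standard cure is to keep the nonlinearity in divergence form $\nabla\cdot(w\otimes w)$ and move the derivative onto the kernel, so that one convolves $|\nabla_y\Gamma|\lesssim|x-y|^{-2}$ against $|y|^{-2}$, which closes at rate $|x|^{-1}$ without a log, while the $|x|^{-2}$ bound for $\nabla u_s$ (and $p_s$) then requires a separate local-regularity argument because second derivatives of the kernel are not absolutely integrable. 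Finally, the passage from $\R^3$ to the exterior domain by ``cut off, correct the divergence, solve a bounded-domain problem'' only produces the equation up to commutator errors supported in an annulus, so it yields a parametrix and still needs a Fredholm-plus-uniqueness argument to invert. None of these defects is fatal to the strategy, but each is a genuine missing step rather than a routine verification.
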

From now on, we assume that the angular velocity $\omega_0=(0,0,a)^\top$ always satisfies 
$|\omega_0|=|a|\leq \eta$.
Proposition \ref{galdi2003} then yields
\begin{align*}
u_s\in L^{3,\infty}(D)\cap L^{\infty}(D),\quad \nabla u_s\in L^{\frac{3}{2},\infty}(D)\cap L^{\infty}(D),
\quad |x|\nabla u_s\in L^{3,\infty}(D)\cap L^\infty(D)
\end{align*}
and 
\begin{align}\label{hstaest2}
H(t)\in L^{3,\infty}(D),\quad 
\|H(t)\|_{3,\infty}\leq C(a^2+\alpha|a|)
\end{align}
for all $t>0$. Here, $H(t)$ is defined by (\ref{h}) and 
$\alpha=\displaystyle \sup_{t\in\R}|\psi'(t)|.$

\par We next collect some results on the evolution operator associated 
with (\ref{EV1}). 
We define the linear operator by 
\begin{align*}
\D_q(L(t))&=\{
u\in L^q_{\sigma}(D)\cap W_0^{1,q}(D)\cap W^{2,q}(D)\mid
(\omega_0\times x)\cdot \nabla u\in L^q(D)\},\\
L(t)u&=-P[\Delta u+ (\psi(t)\omega_0\times x)\cdot \nabla u- \psi(t)\omega_0 \times u].
\end{align*}
Then the problem (\ref{EV1}) is formulated as 
\begin{align}\label{EV2}
\partial_t u+L(t)u=0,\quad t\in (s,\infty);\quad u(s)=f 
\end{align}
in $L^q_{\sigma}(D)$. 
We can see that (\ref{psidef}) implies
\begin{align}\label{globalholder}
\psi(t)\omega_0\in C^\theta ([0,\infty);\R^3)\cap L^{\infty}(0,\infty;\R^3) 
\end{align}
for all $\theta\in (0,1)$. In fact, we have 
\begin{align}\label{supholder}
\sup_{0\leq t<\infty}|\psi(t)\omega_0|=|a|,\quad 
\sup_{0\leq s<t<\infty}\frac{|\psi(t)\omega_0-\psi(s)\omega_0|}{(t-s)^{\theta}}\leq |a|
\max_{t\in \R}|\psi'(t)|
\end{align}
for all $\theta\in (0,1)$. 
We fix, for instance, $\theta=1/2.$ 
Under merely the local H\"{o}lder continuity of the angular velocity, 
Hansel and Rhandi \cite{harh2014} proved the following proposition 
(see also Hishida \cite{hishidapre} concerning the assertion 1). 
Indeed they did not derive the assertion 4, 
but it directly follows from the real interpolation.
For completeness, we give its proof. 
\begin{prop}[\cite{harh2014}]\label{harhprop}
Let $1<q<\infty$. Suppose (\ref{psidef}). 
The operator family $\{L(t)\}_{t\geq 0}$ generates 
a strongly continuous evolution operator 
$\{T(t,s)\}_{t\geq s\geq 0}$ on $L^q_{\sigma}(D)$ with the following properties:
\begin{enumerate}
\item Let $q\in(3/2,\infty)$ and $s\geq 0$. 
For every $f\in Z_q(D)$ and $t\in (s,\infty),$ we have $T(t,s)f\in Y_q(D)$ and 
$T(\cdot,s)f\in C^1((s,\infty); L^q_{\sigma}(D))$ with
\begin{align*}
\partial_t T(t,s)f+L(t)T(t,s)f=0,\quad t\in(s,\infty) 
\end{align*}
in $L^q_{\sigma}(D)$, where 
\begin{align*}
&Y_q(D)=\{u\in L^q_{\sigma}(D)\cap W_0^{1,q}(D)
\cap W^{2,q}(D)\mid|x|\nabla u\in L^q(D)\},\\
&Z_q(D)=\{u\in L^q_\sigma(D)\cap W^{1,q}(D)\mid|x|\nabla u\in L^q(D)\}.
\end{align*}
\item For every $f\in Y_q(D)$ and $t>0$, 
we have $T(t,\cdot)f\in C^1\big([0,t]; L^q_{\sigma}(D)\big)$ with
\begin{align*}
\partial_s T(t,s)f=T(t,s)L(s)f\quad s\in [0,t]
\end{align*}
in $L^q_{\sigma}(D)$.
\item Let $1<q\leq r<\infty,$ and 
$m,\T\in (0,\infty)$. There is a constant $C=C(q,r,m,\T,D)$ such that 
\begin{align}\label{gradT}
\|\nabla T(t,s)f\|_{r}
\leq C(t-s)^{-\frac{3}{2}(\frac{1}{q}-\frac{1}{r})-\frac{1}{2}}
\|f\|_{q}
\end{align}
holds for all $0\leq s<t\leq \T$ and $f\in L^{q}_{\sigma}(D)$ whenever 
\begin{align}\label{condition m}
(1+\max_{t\in \R}|\psi'(t)|\,)|a|\leq m
\end{align}
is satisfied. 
\item Let $1<q<r<\infty, 1\leq\rho_1,\rho_2\leq\infty
~{\rm{and}}~ m,\T\in (0,\infty)$. 
There is a constant $C=C(q,r,\rho_1,\rho_2,m,\T,D)$ such that 
\begin{align}\label{gradT2}
\|\nabla T(t,s)f\|_{r,\rho_2}
\leq C(t-s)^{-\frac{3}{2}(\frac{1}{q}-\frac{1}{r})-\frac{1}{2}}
\|f\|_{q,\rho_1}
\end{align}
holds for all $0\leq s<t\leq \T$ and $f\in L^{q,\rho_1}_{\sigma}(D)$ 
whenever (\ref{condition m}) is satisfied. 
\end{enumerate}
\end{prop}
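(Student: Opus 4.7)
The first three assertions are established in Hansel--Rhandi \cite{harh2014}, with a further refinement of assertion 1 appearing in \cite{hishidapre}, and the plan is simply to cite those works for them. The genuinely new content is assertion 4, which I will derive from \eqref{gradT} by two successive applications of real interpolation, exploiting the fact that the time exponent
$\gamma(q,r):=\tfrac{3}{2}(\tfrac{1}{q}-\tfrac{1}{r})+\tfrac{1}{2}$ appearing in \eqref{gradT} is affine in each of $1/q$ and $1/r$ separately.

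Fix $q\in(1,\infty)$, $r\in(q,\infty)$, $0\leq s<t\leq\T$, and set $A:=\nabla T(t,s)$. For the first step, I would choose $r_0,r_1$ with $q<r_0<r<r_1<\infty$ and $\theta\in(0,1)$ with $1/r=(1-\theta)/r_0+\theta/r_1$. By \eqref{gradT}, $A$ is bounded from $L^q_\sigma(D)$ into $L^{r_i}(D)$ with operator norm at most $C(t-s)^{-\gamma(q,r_i)}$, $i=0,1$. Applying the real interpolation functor $(\cdot,\cdot)_{\theta,\rho_2}$ on the target side while holding the domain $L^q_\sigma(D)$ fixed, and using the identification $L^{r,\rho_2}(D)=(L^{r_0}(D),L^{r_1}(D))_{\theta,\rho_2}$ recalled in Subsection \ref{notation}, I obtain that $A:L^q_\sigma(D)\to L^{r,\rho_2}(D)$ is bounded with norm at most $C(t-s)^{-\gamma(q,r)}$, since the affine dependence of $\gamma$ on $1/r$ gives $(1-\theta)\gamma(q,r_0)+\theta\gamma(q,r_1)=\gamma(q,r)$.

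For the second step, I would now pick $q_0,q_1$ with $1<q_0<q<q_1<r$ and $\theta'\in(0,1)$ with $1/q=(1-\theta')/q_0+\theta'/q_1$, so that $L^{q,\rho_1}_\sigma(D)=(L^{q_0}_\sigma(D),L^{q_1}_\sigma(D))_{\theta',\rho_1}$ by the Borchers--Miyakawa identification recalled in Subsection \ref{notation}. Applying the previous step with $q$ replaced by $q_0$ and $q_1$ gives bounded maps $A:L^{q_i}_\sigma(D)\to L^{r,\rho_2}(D)$ with norms at most $C(t-s)^{-\gamma(q_i,r)}$. A second application of real interpolation, this time $(\cdot,\cdot)_{\theta',\rho_1}$ on the domain side with the target $L^{r,\rho_2}(D)$ held fixed, then yields $A:L^{q,\rho_1}_\sigma(D)\to L^{r,\rho_2}(D)$ with norm at most $C(t-s)^{-\gamma(q,r)}$, using the affine dependence of $\gamma$ on $1/q$; this is exactly \eqref{gradT2}. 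I do not expect a real obstacle: the only points to verify are the affine behavior of $\gamma$, which is immediate, and that $L^{r,\rho_2}(D)$ is a Banach space so that it can serve as the fixed codomain in the second interpolation, which is standard for $r\in(1,\infty)$ and $\rho_2\in[1,\infty]$.
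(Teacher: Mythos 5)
Your argument is correct and essentially coincides with the paper's: assertions 1--3 are quoted from Hansel--Rhandi (with \cite{hishidapre} for assertion 1), and assertion 4 is deduced from \eqref{gradT} by real interpolation, exploiting that the exponent $\tfrac{3}{2}(\tfrac{1}{q}-\tfrac{1}{r})+\tfrac{1}{2}$ is affine in $1/q$ and $1/r$. The only organizational difference is that the paper first interpolates to get $\|\nabla T(t,s)f\|_{r_i,\rho_1}\le C(t-s)^{-\frac{3}{2}(\frac{1}{q}-\frac{1}{r_i})-\frac{1}{2}}\|f\|_{q,\rho_1}$ for two exponents $r_0<r<r_1$ and then applies the reiteration theorem to realize $L^{r,\rho_2}_{\sigma}(D)$ as $\bigl(L^{r_0,\rho_1}_{\sigma}(D),L^{r_1,\rho_1}_{\sigma}(D)\bigr)_{\beta,\rho_2}$, whereas you perform two one-sided interpolations (target side, then domain side); both routes yield \eqref{gradT2}.
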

\hspace{-0.6cm}{\bf{Proof of the assertion 4.}}~
We choose $r_0,r_1$ such that $1<q<r_0<r<r_1<\infty.$ 
From the assertion 3 and the real interpolation, we have 
\begin{align}
&\|\nabla T(t,s)f\|_{r_0,\rho_1}\leq C(t-s)^{-\frac{3}{2}(\frac{1}{q}-\frac{1}{r_0})-\frac{1}{2}}
\|f\|_{q,\rho_1},\label{lqlr00}\\
&\|\nabla T(t,s)f\|_{r_1,\rho_1}\leq C(t-s)^{-\frac{3}{2}(\frac{1}{q}-\frac{1}{r_1})-\frac{1}{2}}
\|f\|_{q,\rho_1}.\label{lqlr1}
\end{align}
By the reiteration theorem 
for real interpolation 
(see for instance \cite[Theorem 3.5.3]{belobook1976}), we obtain 
\begin{align}\label{realinterpolation}
L_\sigma^{r,\rho_2}(D)=
(L_{\sigma}^{r_0,\rho_1}(D),L_{\sigma}^{r_1,\rho_1}(D))_{\beta,\rho_2},\quad 
\|u\|_{r,\rho_2}\leq C\|u\|_{r_0,\rho_1}^{1-\beta}\|u\|^{\beta}_{r_1,\rho_1},\quad 
\frac{1}{r}=\frac{1-\beta}{r_0}+\frac{\beta}{r_1}
\end{align}
which combined with (\ref{lqlr00}) and 
(\ref{lqlr1}) concludes (\ref{gradT2}).\qed
\vspace{0.2cm}
\par We know that the adjoint operator $T(t,s)^*$ is also a strongly 
continuous evolution operator and 
satisfies the backward semigroup property
\begin{align*}
T(\tau,s)^*T(t,\tau)^*=T(t,s)^*\quad (t\geq \tau\geq s\geq 0),\quad T(t,t)^*=I,
\end{align*} 
see Hishida \cite[Subsection 2.3]{hishida2018}. 
Under the assumption (\ref{globalholder}) with some $\theta\in(0,1)$, 
Hishida \cite{hishida2018,hishidapre} established 
the following 
$L^q$-$L^r$ decay estimates. The assertion 3 is not found there 
but can be proved in the same way as above. 
We note that the idea of deduction of (\ref{intesti}) 
below is due to Yamazaki \cite{yamazaki2000} once we have the assertion 5.
\begin{prop}[\cite{hishida2018,hishidapre}]\label{hishidaprop}
\quad Let $m\in (0,\infty)$ and suppose (\ref{psidef}).
\begin{enumerate}
\item Let $1<q\leq r\leq\infty~(q\ne\infty)$. 
Then there exists a constant $C=C(m,q,r,D)$ such that
\begin{align}
&\|T(t,s)f\|_{r}\leq
C(t-s)^{-\frac{3}{2}(\frac{1}{q}-\frac{1}{r})}\|f\|_{q}\label{lqlr0}\\
&\|T(t,s)^*g\|_{r}\leq
C(t-s)^{-\frac{3}{2}(\frac{1}{q}-\frac{1}{r})}\|g\|_{q}\label{ad0}
\end{align}
hold for all $t> s\geq 0$ and $f,g\in L^q_{\sigma}(D)$ 
whenever (\ref{condition m}) is satisfied.
\item Let $1<q\leq r<\infty, ~1\leq \rho\leq \infty$. 
Then there exists a constant $C=C(m,q,r,\rho,D)$ 
such that
\begin{align}
&\|T(t,s)f\|_{r,\rho}\leq
C(t-s)^{-\frac{3}{2}(\frac{1}{q}-\frac{1}{r})}
\|f\|_{q,\rho}\label{lqlr}\\
&\|T(t,s)^*g\|_{r,\rho}
\leq 
C(t-s)^{-\frac{3}{2}(\frac{1}{q}-\frac{1}{r})}\|g\|_{q,\rho}\label{ad}
\end{align}
hold for all $t> s\geq 0$ and $f,g\in L^{q,\rho}_{\sigma}(D)$ 
whenever (\ref{condition m}) is satisfied.
\item Let $1<q<r<\infty, 1\leq \rho_1,\rho_2\leq \infty$. 
Then there exists a constant 
$C=C(m,q,r,\rho_1,\rho_2,D)$ such that
\begin{align}
&\|T(t,s)f\|_{r,\rho_2}\leq C(t-s)^{-\frac{3}{2}(\frac{1}{q}-\frac{1}{r})}\|f\|_{q,\rho_1}\label{lqlr2}\\
&\|T(t,s)^*g\|_{r,\rho_2}\leq C(t-s)^{-\frac{3}{2}(\frac{1}{q}-\frac{1}{r})}\|g\|_{q,\rho_1}\label{ad2}
\end{align}
hold for all $t> s\geq 0$ and $f,g\in L^{q,\rho_1}_{\sigma}(D)$ 
whenever (\ref{condition m}) is satisfied.
\item Let $1<q\leq r\leq 3$.  
Then there exists a constant $C=C(m,q,r,D)$ 
such that
\begin{align}
&\|\nabla T(t,s)f\|_{r}\leq C(t-s)^{-\frac{3}{2}(\frac{1}{q}-\frac{1}{r})-\frac{1}{2}}
\|f\|_{q}\label{grad0}\\
&\|\nabla T(t,s)^*g\|_{r}\leq C(t-s)^{-\frac{3}{2}(\frac{1}{q}-\frac{1}{r})-\frac{1}{2}}
\|g\|_{q}\label{adgrad0}
\end{align}
hold for all $t> s\geq 0$ and $f,g\in L^{q}_{\sigma}(D)$ 
whenever (\ref{condition m}) is satisfied.
\item Let $1<q\leq r\leq 3,~1\leq \rho<\infty$.  
Then there exists a constant $C=C(m,q,r,\rho,D)$ such that
\begin{align}
&\|\nabla T(t,s)f\|_{r,\rho}\leq C(t-s)^{-\frac{3}{2}(\frac{1}{q}-\frac{1}{r})-\frac{1}{2}}
\|f\|_{q,\rho}\label{grad}\\
&\|\nabla T(t,s)^*g\|_{r,\rho}\leq C(t-s)^{-\frac{3}{2}(\frac{1}{q}-\frac{1}{r})-\frac{1}{2}}
\|g\|_{q,\rho}\label{adgrad}
\end{align}
hold for all $t> s\geq 0$ and $f,g\in L^{q,\rho}_{\sigma}(D)$ 
whenever (\ref{condition m}) is satisfied.
\item Let $1<q\leq r\leq 3$ with $1/q-1/r=1/3$.  
Then there exists a constant $C=C(m,q,D)$ such that
\begin{align}\label{intesti} 
\int_0^t\|\nabla T(t,s)^*g\|_{r,1}\,ds\leq C\|g\|_{q,1}
\end{align}
holds for all $t>0$ and $g\in L^{q,1}_{\sigma}(D)$ 
whenever (\ref{condition m}) is satisfied.
\end{enumerate}
\end{prop}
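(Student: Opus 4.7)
Assertions 1, 2, 4, 5 are established in \cite{hishida2018, hishidapre}, so the plan addresses only the two novel pieces: assertion 3 and the integral bound (\ref{intesti}) in assertion 6. The argument for assertion 3 parallels the proof of assertion 4 of Proposition \ref{harhprop}: pick auxiliary exponents $r_0, r_1$ with $q < r_0 < r < r_1 < \infty$ and apply assertion 2 with the single fine index $\rho_1$ to get
\begin{align*}
\|T(t,s)f\|_{r_i, \rho_1} \leq C (t-s)^{-\frac{3}{2}(\frac{1}{q} - \frac{1}{r_i})} \|f\|_{q, \rho_1}, \qquad i=0,1.
\end{align*}
The reiteration theorem supplies $L^{r, \rho_2}_\sigma(D) = (L^{r_0, \rho_1}_\sigma(D), L^{r_1, \rho_1}_\sigma(D))_{\beta, \rho_2}$ with $1/r = (1-\beta)/r_0 + \beta/r_1$, and real interpolation of the two displayed estimates yields (\ref{lqlr2}). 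The adjoint bound (\ref{ad2}) is argued identically starting from (\ref{ad}).

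For assertion 6 the difficulty is that assertion 5 at the critical relation $1/q - 1/r = 1/3$ furnishes only the non-integrable kernel $(t-s)^{-1}$, so pointwise-in-$s$ estimation of $\|\nabla T(t,s)^* g\|_{r,1}$ followed by $s$-integration cannot succeed. My plan is to adapt Yamazaki's device from \cite{yamazaki2000}, trading one degree of spatial Lorentz regularity for $L^1$-integrability in $s$ via real interpolation. Fix two off-critical spatial pairs $(q_0, r_0), (q_1, r_1)$ with $1 < q_j \leq r_j \leq 3$ so that the time exponents $\alpha_j := \frac{3}{2}(\frac{1}{q_j} - \frac{1}{r_j}) + \frac{1}{2}$ satisfy $\alpha_0 < 1 < \alpha_1$, while the intermediate indices recover $q, r$ through $1/q = (1-\theta)/q_0 + \theta/q_1$ and $1/r = (1-\theta)/r_0 + \theta/r_1$ for a common $\theta \in (0,1)$; the critical relation $1/q - 1/r = 1/3$ then forces $(1-\theta)\alpha_0 + \theta\alpha_1 = 1$ automatically. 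Assertion 5 with $\rho = 1$ on both sides supplies
\begin{align*}
\|\nabla T(t,s)^* g\|_{r_j, 1} \leq C (t-s)^{-\alpha_j} \|g\|_{q_j, 1}, \qquad j = 0, 1,
\end{align*}
so $g \mapsto \bigl(s \mapsto \nabla T(t,s)^* g\bigr)$ is a bounded map $L^{q_j, 1}(D) \to L^{1/\alpha_j, \infty}\bigl((0,t); L^{r_j, 1}(D)\bigr)$ with operator norm uniform in $t$.

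The main obstacle is to combine these two weak-type time bounds into the single strong $L^1$-in-$s$ estimate at the intermediate indices. Real interpolation with parameter $(\theta, 1)$ on both sides achieves this: on the input side one uses $L^{q, 1}(D) = (L^{q_0, 1}(D), L^{q_1, 1}(D))_{\theta, 1}$, and on the output side the Marcinkiewicz identity $(L^{1/\alpha_0, \infty}, L^{1/\alpha_1, \infty})_{\theta, 1} = L^{1, 1} = L^1$ for the time variable (valid precisely because $(1-\theta)\alpha_0 + \theta\alpha_1 = 1$) combines with the spatial identity $(L^{r_0, 1}(D), L^{r_1, 1}(D))_{\theta, 1} = L^{r, 1}(D)$ and the commutation of real interpolation with vector-valued $L^p_t$-spaces to yield a bounded map $L^{q,1}(D) \to L^1\bigl((0,t); L^{r,1}(D)\bigr)$, i.e.\ exactly (\ref{intesti}). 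Uniformity of $C$ in $t$ is automatic because the constants in assertion 5 depend only on $m, q_j, r_j, D$ under (\ref{condition m}).
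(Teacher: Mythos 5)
Your proposal matches the paper's treatment of this proposition: assertions 1, 2, 4, 5 are simply quoted from \cite{hishida2018,hishidapre}, assertion 3 is obtained by the same reiteration/real-interpolation argument used for assertion 4 of Proposition \ref{harhprop}, and (\ref{intesti}) is deduced from assertion 5 by Yamazaki's interpolation device, exactly as the paper indicates. The only loose point is that the vector-valued identity $\left(L^{1/\alpha_0,\infty}(Y_0),L^{1/\alpha_1,\infty}(Y_1)\right)_{\theta,1}=L^1(Y)$ with varying target spaces is not an off-the-shelf theorem; the safe route (and Yamazaki's actual argument) is to decompose $g$ atomically in $(L^{q_0,1},L^{q_1,1})_{\theta,1}$ and split the $s$-integral at the crossover of the two pointwise bounds, which gives the same conclusion, and one should also note that at the endpoint $r=3$ the off-critical pairs must be chosen by perturbing $q$ alone so as to stay within the range $r_j\leq 3$ of assertion 5.
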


To prove the $L^{\infty}$ decay estimate in Theorem \ref{thm1}, 
we also prepare the following $L^1$-$L^r$ estimates. 
The following estimates for data being in $C_0^\infty(D)^3$ are enough 
for later use, but it is clear that, for instance, 
the composite operator $T(t,s)P$ extends to 
a bounded operator from $L^1(D)$ to $L^r_\sigma(D)$ 
with the same estimate.

\begin{lem}\label{lem,infty,1}
Let $m\in (0,\infty)$ and suppose (\ref{psidef}).
\begin{enumerate}
\item Let $1<r<\infty$. 
Then there is a constant $C=C(m,r,D)>0$ such that
\begin{align}
&\|T(t,s)Pf\|_r\leq C(t-s)^{-\frac{3}{2}(1-\frac{1}{r})}\|f\|_1,\label{l1lr}\\
&\|T(t,s)^*Pg\|_r\leq C(t-s)^{-\frac{3}{2}(1-\frac{1}{r})}\|g\|_1\label{l1lrad}
\end{align}
for all $t>s\geq 0$ and $f,g\in C^\infty_0(D)^3$ whenever 
(\ref{condition m}) is satisfied.
\item Let $1<r<\infty$ and $1\leq\rho\leq \infty$. 
Then there is a constant 
$C=C(m,r,\rho,D)>0$ such that
\begin{align}
&\|T(t,s)Pf\|_{r,\rho}\leq C(t-s)^{-\frac{3}{2}(1-\frac{1}{r})}\|f\|_1,\label{l1lr2}\\
&\|T(t,s)^*Pg\|_{r,\rho}\leq C(t-s)^{-\frac{3}{2}(1-\frac{1}{r})}\|g\|_1\label{l1lr2ad}
\end{align}
for all $t>s\geq 0$ and $f,g\in C^\infty_0(D)^3$ 
whenever (\ref{condition m}) is satisfied.
\item Let $1<r\leq 3,\,1\leq\rho<\infty$.  
Then there is a constant 
$C=C(m,r,\rho,D)>0$ such that
\begin{align}
\|\nabla T(t,s)Pf\|_{r,\rho}\leq C(t-s)^{-\frac{3}{2}(1-\frac{1}{r})-\frac{1}{2}}\|f\|_1,
\label{gradl1lr}\\
\|\nabla T(t,s)^*Pg\|_{r,\rho}\leq C(t-s)^{-\frac{3}{2}(1-\frac{1}{r})-\frac{1}{2}}\|g\|_1\label{gradl1lrad}
\end{align}
for all $t>s\geq 0$ and $f,g\in C^{\infty}_0(D)^3$ whenever 
(\ref{condition m}) is satisfied.
\end{enumerate}
\end{lem}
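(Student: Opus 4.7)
The plan is to deduce all three assertions from Proposition \ref{hishidaprop}, in the order \emph{assertion 1} $\to$ \emph{assertion 2} $\to$ \emph{assertion 3}, via duality, real interpolation, and a midpoint evolution splitting respectively. The only delicate step will be the duality in assertion 1: the adjoint of $T(t,s)P\colon L^1\to L^r$ must be identified as $P_{r'}T(t,s)^*$ acting on $L^{r'}$, which requires careful use of the Helmholtz decomposition and the silent cancellation of gradient parts against solenoidal images.

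For assertion 1, take $f\in C_0^\infty(D)^3$, so that $T(t,s)Pf\in L^r_\sigma(D)$ for every $r\in(1,\infty)$. Since $r'=r/(r-1)<\infty$, $C_0^\infty(D)^3$ is dense in $L^{r'}(D)$, and
\[
\|T(t,s)Pf\|_r = \sup_{\substack{g\in C_0^\infty(D)^3 \\ \|g\|_{r'}\leq 1}} \bigl|(T(t,s)Pf,\,g)\bigr|.
\]
Writing $g=P_{r'}g+(I-P_{r'})g$, the gradient part pairs to zero against the solenoidal $T(t,s)Pf$, so the inner product equals $(T(t,s)Pf,P_{r'}g)=(f,T(t,s)^*P_{r'}g)$ (the Helmholtz remainder of $f$ pairs to zero against the solenoidal $T(t,s)^*P_{r'}g$). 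Thus $|(T(t,s)Pf,g)|\leq\|f\|_1\,\|T(t,s)^*P_{r'}g\|_\infty$. Applying (\ref{ad0}) with $q=r'$ and target $L^\infty$, together with boundedness of $P_{r'}$ on $L^{r'}$, yields
\[
\|T(t,s)^*P_{r'}g\|_\infty \leq C(t-s)^{-3/(2r')}\|g\|_{r'} = C(t-s)^{-\frac{3}{2}(1-\frac{1}{r})}\|g\|_{r'},
\]
which is (\ref{l1lr}). The companion estimate (\ref{l1lrad}) is obtained by interchanging the roles of $T(t,s)$ and $T(t,s)^*$ and invoking (\ref{lqlr0}) in place of (\ref{ad0}).

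For assertion 2, I would invoke real interpolation. Fix $1<r_0<r<r_1<\infty$ and $\theta\in(0,1)$ with $1/r=(1-\theta)/r_0+\theta/r_1$; assertion 1 supplies bounded maps $T(t,s)P\colon L^1\to L^{r_i}$ with norms $\leq C(t-s)^{-(3/2)(1-1/r_i)}$ for $i=0,1$. Real interpolation produces a bounded map $(L^1,L^1)_{\theta,\rho}\to(L^{r_0},L^{r_1})_{\theta,\rho}$, and since $(L^1,L^1)_{\theta,\rho}=L^1$ up to equivalent norm, while $(L^{r_0},L^{r_1})_{\theta,\rho}=L^{r,\rho}$, computing the convex combination of exponents gives exactly (\ref{l1lr2}). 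The adjoint version (\ref{l1lr2ad}) is identical.

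For assertion 3, I would split the evolution at the midpoint $s'=(t+s)/2$, writing $T(t,s)Pf=T(t,s')\,h$ with $h:=T(s',s)Pf$ via the evolution property $T(t,s)=T(t,s')T(s',s)$. Assertion 2 gives $\|h\|_{r,\rho}\leq C(s'-s)^{-(3/2)(1-1/r)}\|f\|_1$, while the Lorentz gradient estimate (\ref{grad}) with $q=r$ — whose hypotheses $1<r\leq 3$ and $1\leq\rho<\infty$ are exactly those of assertion 3 — yields $\|\nabla T(t,s')h\|_{r,\rho}\leq C(t-s')^{-1/2}\|h\|_{r,\rho}$. Chaining these and using $(t-s')=(s'-s)=(t-s)/2$ produces (\ref{gradl1lr}). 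The adjoint estimate (\ref{gradl1lrad}) is proved in parallel, splitting $T(t,s)^*=T(s',s)^*\,T(t,s')^*$ via the backward semigroup property and invoking (\ref{adgrad}) in place of (\ref{grad}).
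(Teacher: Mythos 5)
Your proof is correct, and for assertions 1 and 3 it is essentially the paper's own argument: the duality computation $|(T(t,s)Pf,\varphi)|=|(f,T(t,s)^*\varphi)|\le\|f\|_1\|T(t,s)^*\varphi\|_\infty$ combined with (\ref{ad0}) for part 1 (the paper tests directly against $\varphi\in L^{r'}_\sigma(D)$ and uses the duality $L^r_\sigma=(L^{r'}_\sigma)^*$, so your explicit Helmholtz decomposition of a general $g\in C_0^\infty(D)^3$ is a harmless elaboration of the same step), and the midpoint factorization $T(t,s)=T(t,\tfrac{t+s}{2})T(\tfrac{t+s}{2},s)$ together with (\ref{grad}), (\ref{adgrad}) for part 3. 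The only genuine divergence is in assertion 2: you pass from $L^1\to L^{r_i}$ to $L^1\to L^{r,\rho}$ by real interpolation, using $(L^1,L^1)_{\theta,\rho}=L^1$ and $(L^{r_0}_\sigma,L^{r_1}_\sigma)_{\theta,\rho}=L^{r,\rho}_\sigma$, whereas the paper performs a second midpoint splitting and invokes the Lebesgue-to-Lorentz smoothing estimate (\ref{lqlr2}) on the second half of the interval. Both are sound and give the same constant behaviour; your route is self-contained given assertion 1 and the interpolation identities already recorded in Section 2, while the paper's route avoids any discussion of extending $T(t,s)P$ off the dense class $C_0^\infty(D)^3$ (a point you should at least mention, since the two interpolands must be recognized as restrictions of one operator on $L^1$) at the cost of consuming one more estimate from Proposition \ref{hishidaprop}.
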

\begin{proof}
The proof is simply based on duality argument 
(see Koba \cite[Lemma 2.15]{koba2017}), however, 
we give it for completeness. 
Let $1<q\leq r<\infty$ and $1/r+1/r'=1$. 
By using (\ref{ad0}), 
we see that 
\begin{align}\label{argue}
|(T(t,s)Pf,\varphi)|=|(f,T(t,s)^*\varphi)|\leq \|f\|_1\|T(t,s)^*\varphi\|_\infty
\leq C(t-s)^{-\frac{3}{2}(1-\frac{1}{r})}\|f\|_1\|\varphi\|_{r'}
\end{align}
for all $\varphi \in L^{r'}_\sigma(D)$, 
which implies (\ref{l1lr}). We next show (\ref{l1lr2}). 
We fix $q$ such that $1<q<r$. 
Combining the estimate (\ref{lqlr2}) with (\ref{l1lr}), 
we have 
\begin{align*}
\|T(t,s)Pf\|_{r,\rho}\leq 
C(t-s)^{-\frac{3}{2}(\frac{1}{q}-\frac{1}{r})}
\left\|T\left(\frac{t+s}{2},s\right)Pf\right\|_q
\leq C(t-s)^{-\frac{3}{2}(1-\frac{1}{r})}\|f\|_1.
\end{align*}
Finally, in view of (\ref{grad}) and (\ref{l1lr2}), we have 
\begin{align*}
\|\nabla T(t,s)Pf\|_{r,\rho}\leq 
C(t-s)^{-\frac{1}{2}}
\left\|T\left(\frac{t+s}{2},s\right)Pf\right\|_{r,\rho}
\leq C(t-s)^{-\frac{3}{2}(1-\frac{1}{r})-\frac{1}{2}}\|f\|_1
\end{align*} 
which implies (\ref{gradl1lr}). 
The proof for the adjoint $T(t,s)^*$ is accomplished in the same way.
\end{proof}

\section{Proof of the main theorems}\label{proof}
~~~In this section we prove the main theorems 
(Theorem~\ref{thm1} and Theorem~\ref{thm3}). 
We first give some key estimates and then show Theorem~\ref{thm3}. 
After that, following Yamazaki \cite{yamazaki2000}, 
we construct a solution with some decay properties 
for (\ref{NS6}) 
and then derive the $L^\infty$ decay of the solution. 
We finally identify 
the solution above with a local solution 
possessing better regularity 
for the integral equation (\ref{integraleq}) in a 
neighborhood of each time $t>0$. 
\par Let us define the function spaces
\begin{align*}
&X=\big\{v\in BC_{w^*}\big((0,\infty);L^{3,\infty}_{\sigma}(D)\big)
\,\big|\,\lim_{t\rightarrow 0}
\|v(t)\|_{3,\infty}=0\big\},\\
&X_q=\big\{v\in X\,\big|\,t^{\frac{1}{2}-\frac{3}{2q}}v
\in BC_{w^*}\big((0,\infty);L_{\sigma}^{q,\infty}(D)
\big)\big\},~~3<q<\infty.
\end{align*}
Both are Banach spaces endowed with 
norms $\|\cdot\|_X=\|\cdot\|_{X,\infty}$ and 
$\|\cdot\|_{X_q}=\|\cdot\|_{X_q,\infty}$, respectively, where 
\begin{align*}
&\|v\|_{X,t}=\sup_{0<\tau<t}\|v(\tau)\|_{3,\infty},\quad 
\|v\|_{X_q,t}=\|v\|_{X,t}
+[v]_{q,t},\quad 
[v]_{q,t}=\sup_{0<\tau<t}
\tau^{\frac{1}{2}-\frac{3}{2q}}\|v(\tau)\|_{q,\infty}
\end{align*}
for $t\in (0,\infty].$ 
\begin{lem}\label{key}
\begin{enumerate}
\item ~Let $v,w\in X$ and set
\begin{align*}
&\braket{\I(v,w)(t),\varphi}:=
\int_0^t (v(\tau)\otimes w(\tau),
\nabla T(t,\tau)^*\varphi)\,d\tau,\\
&\braket{\J(v)(t),\varphi}:=
\int_0^t (\psi(\tau)\{v(\tau)\otimes u_s+u_s\otimes v(\tau)\},
\nabla T(t,\tau)^*\varphi)\,d\tau
\end{align*}
for all $\varphi\in C_{0,\sigma}^{\infty}(D)$. 
Then $\I(v,w),\J(v)\in X$ and there exists a positive 
constant $C$ such that
\begin{align}\label{e1}
\|\I(v,w)\|_{X,t}\leq C\|v\|_{X,t}\|w\|_{X,t},\quad\quad 
\|\J(v)\|_{X,t}\leq C\|u_s\|_{3,\infty}\|v\|_{X,t}
\end{align}
hold for any $v,w\in X$ and $t\in(0,\infty]$.
\item ~Let $q\in(3,\infty)$. If $v\in X_q$, $w\in X$, 
then $\I(v,w),\J(v)\in X_q$ and 
there exists a positive constant $C=C(q)$ such that
\begin{align}\label{e2}
\|\I(v,w)\|_{X_q,t}\leq C\|v\|_{X_q,t}\|w\|_{X,t},
\quad \|\J(v)\|_{X_q,t}\leq C\|u_s\|_{3,\infty}\|v\|_{X_q,t}
\end{align}
hold for every $v\in X_q$, $w\in X$ and $t\in(0,\infty]$.
\item ~We set 
\begin{align*}
\braket{\K(t),\varphi}:=\int_0^t (H(\tau),T(t,\tau)^*\varphi)\,d\tau
\end{align*}
for $\varphi\in C_{0,\sigma}^\infty(D)$. 
Let $q\in(3,\infty)$. Then $\K\in X_q$ and 
there exist positive constants $C$ independent of $q$ 
and $C'=C'(q)$ such that
\begin{align}\label{kest}
\|\K\|_{X,t}\leq C(a^2+\alpha|a|),\quad 
\|\K\|_{X_q,t}\leq C'(a^2+\alpha|a|)
\end{align}
hold for every $t\in (0,\infty]$.
\end{enumerate}
\end{lem}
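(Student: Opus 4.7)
The overall strategy is a duality argument: since $C_{0,\sigma}^\infty(D)$ is dense in $L^{q/(q-1),1}_\sigma(D)$ and the latter is the predual of $L^{q,\infty}_\sigma(D)$, a bound of the form $|\braket{\,\cdot\,,\varphi}|\leq C\|\varphi\|_{q/(q-1),1}$ uniform for $\varphi\in C_{0,\sigma}^\infty(D)$ will deliver the desired $L^{q,\infty}_\sigma$-norm estimate on the left-hand side. Inside each time integral I apply H\"older's inequality in Lorentz spaces to the spatial pairing, then invoke the estimates of Proposition \ref{hishidaprop} on the adjoint evolution operator $T(t,\tau)^{\ast}$. The weak-$\ast$ continuity of $t\mapsto\I(v,w)(t),\J(v)(t),\K(t)$ and the vanishing at $t=0$ follow from the resulting quantitative bounds together with the strong continuity of $\tau\mapsto T(t,\tau)^{\ast}\varphi$ by routine arguments, so I will concentrate on the norm estimates.

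For part 1 the key is that $v,w\in X$ gives $v\otimes w\in L^{3/2,\infty}(D)$ with $\|v\otimes w\|_{3/2,\infty}\leq C\|v\|_{X,t}\|w\|_{X,t}$; pairing this with $\nabla T(t,\tau)^{\ast}\varphi\in L^{3,1}(D)$ and applying the Yamazaki-type integrated bound (\ref{intesti}) with $(q,r)=(3/2,3)$ gives
\[
|\braket{\I(v,w)(t),\varphi}|\leq C\|v\|_{X,t}\|w\|_{X,t}\!\int_0^t\|\nabla T(t,\tau)^{\ast}\varphi\|_{3,1}\,d\tau\leq C\|v\|_{X,t}\|w\|_{X,t}\|\varphi\|_{3/2,1}.
\]
The $\J$ estimate is identical after replacing one factor by $u_s\in L^{3,\infty}$ and absorbing $|\psi|\leq 1$.

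For part 2 the main obstacle is that the natural pairing of $v\otimes w\in L^{3q/(q+3),\infty}$ with $\nabla T^{\ast}\varphi\in L^{3q/(2q-3),1}$ via the pointwise estimate (\ref{adgrad}) produces the kernel $(t-\tau)^{-1}$, which is not integrable. I overcome this by splitting $[0,t]=[0,t/2]\cup[t/2,t]$. On $[0,t/2]$ I use only the $X$-norm of $v$, so that $v\otimes w\in L^{3/2,\infty}$ pairs with $\nabla T^{\ast}\varphi\in L^{3,1}$; (\ref{adgrad}) applied with exponents $(q/(q-1),3)$ then yields the kernel $(t-\tau)^{-3/2+3/(2q)}$, which is bounded by $Ct^{-3/2+3/(2q)}$ since $t-\tau\geq t/2$, and integrates to $O(t^{-1/2+3/(2q)})$. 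On $[t/2,t]$ the $X_q$-bound gives $\|v(\tau)\|_{q,\infty}\leq [v]_{q,t}\tau^{-1/2+3/(2q)}\leq C[v]_{q,t}t^{-1/2+3/(2q)}$ (the exponent is negative for $q>3$ and $\tau\geq t/2$); I pair $v\otimes w\in L^{3q/(q+3),\infty}$ with $\nabla T^{\ast}\varphi\in L^{3q/(2q-3),1}$ and apply (\ref{intesti}) with $(q,r)=(q/(q-1),3q/(2q-3))$, whose exponent relation $1/q-1/r=1/3$ is indeed satisfied, pulling the $t$-weight out of the integral. Multiplication by $t^{1/2-3/(2q)}$ then delivers the $X_q$-estimate. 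The $\J$ bound proceeds analogously with $w=u_s$.

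For part 3 the essential observation is that $\psi\equiv 1$ and $\psi'\equiv 0$ on $[1,\infty)$, so by (\ref{h}) we have $H(\tau)=0$ for $\tau\geq 1$, and the integral defining $\K(t)$ reduces to $[0,\min(t,1)]$. Combining (\ref{hstaest2}) with H\"older to pair $H(\tau)\in L^{3,\infty}$ with $T(t,\tau)^{\ast}\varphi\in L^{3/2,1}$, I apply (\ref{ad}) with $(q,r,\rho)=(3/2,3/2,1)$ for $\|\K\|_{X,t}$ (the kernel is constant and the interval has length $\leq 1$), and with $(q,r,\rho)=(q/(q-1),3/2,1)$ for $\|\K\|_{X_q,t}$, producing the integrable kernel $(t-\tau)^{-1/2+3/(2q)}$. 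Distinguishing $t\leq 2$ from $t>2$ (in the latter case $t-\tau\geq t/2$ on $[0,1]$) gives uniform control with the correct scaling and yields both estimates in (\ref{kest}).
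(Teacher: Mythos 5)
Your proposal is correct and follows essentially the same route as the paper: the $t/2$-splitting with (\ref{intesti}) on the near-endpoint piece and (\ref{adgrad}) on the far piece for part~2, and the $t\lessgtr 2$ case distinction with (\ref{ad}) for part~3 are exactly the paper's argument (your handling of $[0,t/2]$ via the $X$-norm of $v$ and the kernel $(t-\tau)^{-3/2+3/(2q)}$, rather than the paper's use of the $X_q$-weight with kernel $(t-\tau)^{-1}$, is an immaterial variant yielding the same rate). The one point you defer as routine --- the weak-$\ast$ continuity --- is where the paper is most careful, using the backward semigroup property $T(\sigma,\tau)^*T(t,\sigma)^*=T(t,\tau)^*$ to split $\I(v,w)(t)-\I(v,w)(\sigma)$ into a piece controlled by (\ref{intesti}) times $\|T(t,\sigma)^*\varphi-\varphi\|_{3/2,1}$ and a short-interval piece of size $(t-\sigma)^{1/2}$; since strong continuity of the adjoint evolution operator is precisely the ingredient you cite, your sketch closes.
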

\begin{proof}
Estimates (\ref{e1}) and (\ref{e2}) can be proved in the same way as done by 
Yamazaki \cite[Lemma 6.1.]{yamazaki2000}, see also 
Hishida and Shibata \cite[Section 8]{hish2009}, however, we 
briefly give the proof of (\ref{e1})$_1$ and (\ref{e2})$_1$. 
By (\ref{intesti}), we have
\begin{align*}
|\braket{\I(v,w)(t),\varphi}|\leq \|v\|_{X,t}\|w\|_{X,t}
\int_0^t\|\nabla T(t,\tau)^*\varphi\|_{3,1}
\leq C\|v\|_{X,t}\|w\|_{X,t}\|\varphi\|_{\frac{3}{2},1},
\end{align*}
which yields (\ref{e1})$_1$. 
We choose $r$ such that $1/3+1/q+1/r=1$ to find 
\begin{align*}
|\braket{\I(v,w)(t),\varphi}|\leq [v]_{X_q,t}\|w\|_{X,t}
\int_0^t\tau^{-\frac{1}{2}+\frac{3}{2q}}
\|\nabla T(t,\tau)^*\varphi\|_{r,1}\,d\tau= [v]_{q,t}\|w\|_{X,t}
\left(\int_0^{\frac{t}{2}}+\int_{\frac{t}{2}}^t\right).
\end{align*}
In view of (\ref{adgrad}), we have 
\begin{align*}
\int_0^{\frac{t}{2}}\leq C\int_0^{\frac{t}{2}}\tau^{-\frac{1}{2}
+\frac{3}{2q}}(t-\tau)^{-1}\,d\tau\|\varphi\|_{q',1}
\leq Ct^{-\frac{1}{2}+\frac{3}{2q}}\|\varphi\|_{q',1},
\end{align*}
where $1/q+1/q'=1$, 
whreas, (\ref{intesti}) implies 
\begin{align*}
\int_{\frac{t}{2}}^t
\leq \left(\frac{t}{2}\right)^{-\frac{1}{2}+\frac{3}{2q}}
\int_0^t\|\nabla T(t,\tau)^*\varphi\|_{r,1}\,d\tau
\leq Ct^{-\frac{1}{2}+\frac{3}{2q}}\|\varphi\|_{q',1}
\end{align*}
from which together with (\ref{e1})$_1$, we obtain (\ref{e2})$_1$. 
The estimate (\ref{e1}) leads us to
\begin{align*}
\lim_{t\rightarrow 0}\|\I(v,w)(t)\|_{3,\infty}=0,\quad 
\lim_{t\rightarrow 0}\|\J(v)(t)\|_{3,\infty}=0.
\end{align*}
\par Let us consider the weak-$\ast$ continuity 
of $\I(v,w)$ with values in $L^{3,\infty}_{\sigma}(D)$ 
(resp. $L^{q,\infty}_{\sigma}(D)$) when $v\in X$ (resp. $v\in X_q$), 
$w\in X$. Here, we need a different argument from 
\cite{yamazaki2000} because of the non-autonomous character 
as well as the non-analyticity of the corresponding semigroup. 
Since $C_{0,\sigma}^{\infty}(D)$ is dense in 
$L_{\sigma}^{\kappa,1}$ ($\kappa=3/2,\,q'$) and since
we know (\ref{e1}) and (\ref{e2}), it suffices to show that
\begin{align}\label{w*contiI}
|\braket{\I(v,w)(t)-\I(v,w)(\sigma),\varphi}|
\rightarrow 0~~ {\rm{as}} ~~\sigma\rightarrow t 
\end{align}
for all $0<t<\infty$ and $\varphi\in C^{\infty}_{0,\sigma}(D)$. 
Let $0<\sigma<t$. By using the backward semigroup property, we have
\begin{align*}
|\braket{\I(v,w)(t)-\I(v,w)(\sigma),\varphi}|&\leq 
\int_0^\sigma|(v(\tau)\otimes w(\tau),
\nabla T(\sigma,\tau)^*(T(t,\sigma)^*\varphi-\varphi)|\,d\tau
\\&\hspace{1.5cm}
+\int_\sigma^t
|(v(\tau)\otimes w(\tau),\nabla T(t,\tau)^*\varphi)|\,d\tau
=:I_1+I_2.
\end{align*}
The estimate (\ref{intesti}) yields
\begin{align*}
I_1&\leq \|v\|_{X,t}\|w\|_{X,t}\int_0^\sigma\|
\nabla T(\sigma,\tau)^*(T(t,\sigma)^*\varphi-\varphi)\|_{3,1}\,d\tau\\
&\leq C\|v\|_{X,t}\|w\|_{X,t}\,
\|T(t,\sigma)^*\varphi-\varphi\|_{\frac{3}{2},1}\rightarrow 0 
\quad{\rm{as}}~ \sigma \rightarrow t.
\end{align*}
Furthermore, (\ref{adgrad}) yields
\begin{align*}
I_2\leq \|v\|_{X,t}\|w\|_{X,t}\int_\sigma^t
\|\nabla T(t,\tau)^*\varphi\|_{3,1}\,d\tau
\leq C\|v\|_{X,t}\|w\|_{X,t}(t-\sigma)^{\frac{1}{2}}\|\varphi\|_{3,1}
\rightarrow 0 \quad {\rm{as}}~\sigma \rightarrow t.
\end{align*} 
We can discuss the other case $0<t<\sigma$ similarly and 
thus we obtain (\ref{w*contiI}). 
By the same manner, we can obtain the desired weak-$\ast$ continuity 
of $\J$. We thus conclude the assertion 1 and 2. 
\par We next consider $\K(t)$. 
We use (\ref{ad}) as well as (\ref{hstaest2})
to obtain 
\begin{align*}
|\braket{\K(t),\varphi}|\leq C(a^2+\alpha|a|)
\int_0^{\min\{1,t\}}|\|T(t,\tau)^*\varphi\|_{\frac{3}{2},1}\,d\tau
\leq C(a^2+\alpha|a|)\min\{1,t\}\|\varphi\|_{\frac{3}{2},1}
\end{align*}
for $\varphi\in C_{0,\sigma}^\infty(D)$ and $t>0$ 
which yields $\K(t)\in L^{3,\infty}_\sigma(D)$ with 
\begin{align*}
\|\K\|_{X,t}\leq C(a^2+\alpha|a|)
\quad {\rm{for}} ~t\in(0,\infty],\quad   
\lim_{t\rightarrow 0} \|\K(t)\|_{3,\infty}=0.
\end{align*}
To derive the estimate 
$[\K]_{q,t}\leq C(a^2+\alpha|a|)$, 
we consider two cases: $0<t\leq 2$ and $t\geq 2$. 
For $0<t\leq2$, (\ref{ad}) yields 
\begin{align*}
|\braket{\K(t),\varphi}|
\leq C(a^2+\alpha|a|)
\int_0^t\|T(t,\tau)^*\varphi\|_{\frac{3}{2},1}\,d\tau
\leq C(a^2+\alpha|a|)\|\varphi\|_{q',1}.
\end{align*}
For $t\geq 2$, we have 
\begin{align*}
|\braket{\K(t),\varphi}|
\leq C(a^2+\alpha|a|)
\int_0^1\|T(t,\tau)^*\varphi\|_{\frac{3}{2},1}\,d\tau
\leq C(a^2+\alpha|a|)\,
t^{-\frac{1}{2}+\frac{3}{2q}}\|\varphi\|_{q',1}.
\end{align*}
We thus obtain (\ref{kest}). 
It remains to show the weak-$\ast$ continuity. To this end, it is 
sufficient to show that 
\begin{align}\label{w*contiJ} 
|\braket{\K(t)-\K(\sigma),\varphi}|\rightarrow 0~~ {\rm{as}} 
~\sigma\rightarrow t
\end{align}
for all $t\in (0,\infty)$ 
due to (\ref{kest}). To prove (\ref{w*contiJ}), 
we suppose  $0<\sigma<t$. 
We use the backward semigroup property to observe
\begin{align*}
\braket{\K(t)-\K(\sigma),\varphi}
=\int_0^\sigma\big(H(\tau),T(\sigma,\tau)^*(T(t,\sigma)^*
\varphi-\varphi)\big)\,d\tau
+\int_\sigma^t (H(\tau),T(t,\tau)^*\varphi)\,d\tau.
\end{align*}
By applying (\ref{ad}), we find that 
\begin{align*}
&\left|
\int_0^\sigma\big(H(\tau),T(\sigma,\tau)^*(T(t,\sigma)^*\varphi-\varphi)
\big)\,d\tau
\right|
\leq C(a^2+\alpha|a|)\,
\sigma\,\|T(t,\sigma)^*\varphi-\varphi\|_{\frac{3}{2},1}
\rightarrow 0
\quad {\rm{as}} ~\sigma\rightarrow t,\\
&\left|
\int_\sigma^t (H(\tau),T(t,\tau)^*\varphi)\,d\tau
\right|
\leq C(a^2+\alpha|a|)(t-\sigma)\|\varphi\|_{\frac{3}{2},1}
\rightarrow 0\quad {\rm{as}} ~\sigma\rightarrow t.
\end{align*}
The other case $t<\sigma$ is discussed similarly. Hence we have 
(\ref{w*contiJ}). The proof is complete.
\end{proof}

\hspace{-0.6cm}{\bf{Proof of Theorem \ref{thm3}.}}~
The idea of the proof is traced back to Fujita and Kato 
\cite[Theorem 3.1.]{fuka1964}. 
Let $v_1$ and $v_2$ be the solutions of (\ref{NS6}). Then we have 
\begin{align}
(v_1(t)-v_2(t),\varphi)=&\int_0^t
\big(v_1(\tau)\otimes(v_1(\tau)-v_2(\tau))+
(v_1(\tau)-v_2(\tau))
\otimes v_2(\tau)\nonumber\\&+\psi(\tau)(v_1(\tau)-v_2(\tau))\otimes u_s+
\psi(\tau)u_s\otimes(v_1(\tau)-v_2(\tau)),
\nabla T(t,\tau)^*\varphi\big)\,d\tau\label{v1v2}
\end{align}
for $\varphi\in C_{0,\sigma}^\infty(D).$ 
By applying (\ref{e1}) to (\ref{v1v2}) and 
by Proposition \ref{galdi2003}, 
we have
\begin{align*}
\|v_1-v_2\|_{X,t}&
\leq C(\|v_1\|_{X,t}+\|v_2\|_{X,t}+\|u_s\|_{3,\infty})
\|v_1-v_2\|_{X,t}\\
&\leq C(\|v_1\|_{X,t}+\|v_2\|_{X,t}+|a|)\|v_1-v_2\|_{X,t}.
\end{align*} 
Suppose 
\begin{align*}
|a|<\frac{1}{2C}=:\widetilde{\delta}.
\end{align*} 
Since $\|v_j(t)\|_{3,\infty}\rightarrow 0$ as $t\rightarrow 0$ ($j=1,2$), 
one can choose $t_0>0$ such that 
$C(\|v_1\|_{X,t_0}+\|v_2\|_{X,t_0})<1/2$, 
which implies $v_1=v_2$ on $(0,t_0]$. 
Hence, (\ref{v1v2}) is written as  
\begin{align}\label{v1v22}
(v_1(t)-v_2(t),\varphi)=
&\int_{t_0}^t\big(v_1(\tau)\otimes(v_1(\tau)-v_2(\tau))+
(v_1(\tau)-v_2(\tau))
\otimes v_2(\tau)\nonumber\\&
+\psi(\tau)(v_1(\tau)-v_2(\tau))\otimes u_s+
\psi(\tau)u_s\otimes(v_1(\tau)-v_2(\tau)),
\nabla T(t,\tau)^*\varphi\big)\,d\tau.
\end{align}
We fix $\T\in (t_0,\infty)$ and set 
$[v]_{q,t_0,t}=\displaystyle\sup_{t_0\leq\tau\leq t}\|v(\tau)\|_q$ 
for $t\in (t_0,\T)$. 
It follows from 
(\ref{v1v22}) that 
\begin{align}\label{v1v2r}
[v_1-v_2]_{q,t_0,t}\leq C_*(t-t_0)^{\frac{1}{2}-\frac{3}{2q}}
[v_1-v_2]_{q,t_0,t}\,,\quad 
t\in (t_0,\T),
\end{align}
where 
$C_*=C_*(t_0,\T)=C([v_1]_{q,t_0,\T}+[v_2]_{q,t_0,\T}+2\|u_s\|_q\big).$ 
In fact, the estimate (\ref{adgrad0}) yields
\begin{align*}
\int_{t_0}^t\big|&\big(v_1(\tau)\otimes(v_1(\tau)-v_2(\tau)),
\nabla T(t,\tau)^*\varphi\big)\big|\,d\tau\\
&\leq C[v_1(\tau)]_{q,t_0,\T}[v_1-v_2]_{q,t_0,t}
\int_{t_0}^t\|\nabla
T(t,\tau)^*\varphi\|_{(1-\frac{2}{q})^{-1}}\,d\tau\\
&\leq C[v_1(\tau)]_{q,t_0,\T}
[v_1-v_2]_{q,t_0,t}(t-t_0)^{\frac{1}{2}-\frac{3}{2q}}
\|\varphi\|_{(1-\frac{1}{q})^{-1}}
\end{align*}
for all $\varphi\in C_{0,\sigma}^{\infty}(D)$ and $t\in (t_0,\T)$.
Since the other terms in (\ref{v1v22}) are treated similarly, 
we obtain (\ref{v1v2r}). We take 
\begin{align*}
\xi=\min\left\{\left(\frac{1}{2C_*}\right)^
{(\frac{1}{2}-\frac{3}{2q})^{-1}},\T-t_0\right\}
\end{align*}
which leads us to $v_1=v_2$ on $(0,t_0+\xi).$ 
Even though we replace $t_0$ by $t_0+\xi,t_0+2\xi,\cdots$, 
we can discuss similarly. Hence, $v_1=v_2$ on $(0,\T)$. 
Since $\T$ is arbitrary, we conclude $v_1=v_2$. \qed
\vspace{0.2cm}
\par To prove Theorem \ref{thm1},
we begin to construct a solution of (\ref{NS6}) 
by applying Lemma \ref{key}.
\begin{prop}\label{thm2}
Let $\psi$ be a function on $\R$ satisfying (\ref{psidef}). 
We put $\alpha=\displaystyle\max_{t\in\R}|\psi'(t)|$. 
\begin{enumerate}
\item There exists $\delta_1>0$ such that 
if $(\alpha+1)|a| \leq \delta_1$, problem (\ref{NS6}) 
admits a unique solution 
within the class
\begin{align*}
\big\{v\in BC_{w^*}\big((0,\infty);L^{3,\infty}_{\sigma}(D)\big)\,\big|
\,&\lim_{t\rightarrow 0}\|v(t)\|_{3,\infty}=0,\\&
\sup_{0<\tau<\infty}\|v(\tau)\|_{3,\infty}
\leq C(\alpha+1)|a|\big\},
\end{align*}
where $C>0$ 
is independent of $a$ and $\psi$. 
\item Let $3<q<\infty$. Then there exists $\delta_2(q)\in(0,\delta_1]$ 
such that if $(\alpha+1)|a|\leq \delta_2$,
\begin{align*}
t^{\frac{1}{2}-\frac{3}{2q}}v
\in BC_{w^*}\big((0,\infty);L_{\sigma}^{q,\infty}(D)\big),
\end{align*}
where $v(t)$ is the solution obtained above. 
\end{enumerate}
\end{prop}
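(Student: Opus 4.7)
The plan is a standard contraction-mapping argument in the Banach spaces $X$ and $X_q$, driven entirely by Lemma \ref{key} together with the Galdi bound $\|u_s\|_{3,\infty}\leq C|a|$ from Proposition \ref{galdi2003}. I would define
\[
\Phi(v):=\I(v,v)+\J(v)+\K,
\]
so that solutions of (\ref{NS6}) are precisely fixed points of $\Phi$, and look for them in a suitable closed ball of $X$.

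For assertion 1, the first bullet of Lemma \ref{key}, combined with $\|u_s\|_{3,\infty}\leq C|a|$ and the elementary inequality $a^2+\alpha|a|\leq C(\alpha+1)|a|$ (valid because $|a|\leq 1$), would yield
\[
\|\Phi(v)\|_X\leq C_1\|v\|_X^2+C_1|a|\,\|v\|_X+C_2(\alpha+1)|a|,\qquad
\|\Phi(v_1)-\Phi(v_2)\|_X\leq C_1\bigl(\|v_1\|_X+\|v_2\|_X+|a|\bigr)\|v_1-v_2\|_X.
\]
Setting $R:=2C_2(\alpha+1)|a|$ and choosing $\delta_1>0$ small enough that both $C_1R+C_1|a|\leq 1/2$ and $C_1(2R+|a|)\leq 1/2$ hold whenever $(\alpha+1)|a|\leq\delta_1$ would make $\Phi$ a contractive self-map of the closed ball of radius $R$ in $X$. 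The Banach fixed-point theorem then delivers a unique solution satisfying $\|v\|_X\leq R=2C_2(\alpha+1)|a|$, which is the quantitative bound asserted in part (i) of Theorem \ref{thm1}. Uniqueness within the class indicated in the statement follows by the same contraction inequality once the decay $\|v(t)\|_{3,\infty}\to 0$ as $t\to 0$ is used to absorb an initial interval, exactly as in the proof of Theorem \ref{thm3}.

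For assertion 2, I would run the same Picard iteration $v_0=0$, $v_{n+1}=\Phi(v_n)$, but now also track it in $X_q$. The second bullet of Lemma \ref{key}, together with $\|u_s\|_{3,\infty}\leq C|a|$, gives
\[
\|\Phi(v)\|_{X_q}\leq C(q)\|v\|_{X_q}\|v\|_X+C(q)|a|\,\|v\|_{X_q}+C(q)(\alpha+1)|a|,
\]
and a matching Lipschitz estimate in $X_q$ with constant bounded by $C(q)(\|v_1\|_X+\|v_2\|_X+|a|)$. Shrinking $\delta_1$ further to some $\delta_2(q)\in(0,\delta_1]$ so that this Lipschitz constant is at most $1/2$ on the ball $B_R$ of part 1, the iterates form a Cauchy sequence in $X_q$ as well and therefore converge there. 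Because convergence in either $X$ or $X_q$ implies weak-$\ast$ convergence in $L^{3,\infty}_\sigma(D)$ at each $t>0$, the two limits coincide, so the fixed point already obtained in $X$ automatically belongs to $X_q$.

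The main obstacle I anticipate is ensuring that the linear-in-$v$ term $\J(v)$ does not spoil the contraction. Because $\|u_s\|_{3,\infty}$ and $\|\K\|_X$ are controlled by the same small parameter $(\alpha+1)|a|$ that dictates the natural radius $R$, the term $C_1|a|\,R$ is comparable to $R$ itself, not of higher order, so smallness must be imposed on $|a|$ alone rather than on $R$. This is precisely why the hypothesis takes the form $(\alpha+1)|a|\leq\delta$ with $\delta$ chosen so that $C_1|a|$ is an absolute small constant; the same bookkeeping is then inherited by the $X_q$-iteration.
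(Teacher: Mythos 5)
Your proposal is correct and follows essentially the same route as the paper: a contraction-mapping argument in the ball of radius $O((\alpha+1)|a|)$ in $X$ driven by Lemma \ref{key} and $\|u_s\|_{3,\infty}\leq C|a|$, with the smallness imposed on $(\alpha+1)|a|$ precisely to absorb the linear term $\J(v)$ and the forcing $\K$. The only (minor) deviation is in assertion 2: you track the Picard iterates simultaneously in $X$ and $X_q$ and pass to the limit, whereas the paper runs a second, independent fixed-point argument in $X_q$ to produce a solution $\hat v$ and then identifies $v=\hat v$ via the uniqueness estimate (\ref{e1}); both mechanisms are valid and equivalent in substance.
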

\begin{proof}
We first show the assertion 1 by the contraction mapping principle. 
Given $v\in X$, we define
\begin{align*}
\braket{(\Phi v)(t),\varphi}=\rm{the~RHS~of}~(\ref{NS6}),\quad 
\varphi\in C_{0,\sigma}^\infty(D).
\end{align*}
Lemma \ref{key} implies that $\Phi v\in X$ with
\begin{align}
&\|\Phi v\|_{X}\leq C_1\|v\|_{X}^2+C_2|a|\|v\|_{X}
+C_3(a^2+\alpha |a|),\label{thm1pf1}\\
&\|\Phi v-\Phi w\|_{X}
\leq (C_1\|v\|_{X}+C_1\|w\|_{X}+C_2|a|)\|v-w\|_{X}\label{thm1pf2}
\end{align}
for every $v,w\in X$. Here, 
$C_1,C_2,C_3,C_4$ are constants 
independent of $v,w,a$ and $\psi$. 
Hence, if we take $a$ satisfying  
\begin{align*}
(\alpha+1)|a|<\min
\left\{\frac{1}{2C_2},\frac{1}{16C_1C_3},\eta\right\}=:\delta_1,
\end{align*}
where $\eta\in (0,1]$ is a constant given in 
Proposition \ref{galdi2003}, 
then we obtain a unique solution $v$ within the class 
\begin{align*}
\{v\in X\mid\|v\|_X\leq 4C_3(\alpha+1)|a|\}
\end{align*}
which completes the proof of the assertion 1. 
\par We next show the assertion 2. 
By applying Lemma \ref{key}, 
we see that $\Phi v\in X_q$ together with 
(\ref{thm1pf1})--(\ref{thm1pf2}) in which 
$X$ norm was replaced by $X_q$ norm and 
the constants $C_i~(i=1,2,3)$ are also replaced by some others 
$\widetilde{C}_i(q)(\geq C_i)$. 
If we assume 
\begin{align}\label{acondiq}
(\alpha+1)|a|<\min\left\{\frac{1}{2\widetilde{C}_2},
\frac{1}{16\widetilde{C}_1\widetilde{C}_3}
,\eta\right\}=:\delta_2\,(\leq \delta_1),
\end{align}
we can obtain a unique solution $\hat{v}$ within the class 
\begin{align*}
\{v\in X_q\mid\|v\|_{X_q}
\leq 4\widetilde{C}_3(\alpha+1)|a|\}.
\end{align*}
Under the condition (\ref{acondiq}), 
let $v$ be the solution obtained in the assertion 1. Then we have 
(\ref{v1v22}) in which $v_1$, $v_2$ are replaced by $v$ and $\hat{v}$. 
By applying (\ref{e1}), we see that 
\begin{align*}
\|v-\hat{v}\|_X\leq 
\big\{C_1(\|v\|_X+\|\hat{v}\|_X)+C_2|a|\big\}\|v-\hat{v}\|_X
\leq
\big\{8\,\widetilde{C}_1\widetilde{C}_3(1+\alpha)|a|
+\widetilde{C}_2|a|\big\}\|v-\hat{v}\|_X.
\end{align*}
Furthermore, the condition (\ref{acondiq}) yields
\begin{align*}
8\,\widetilde{C}_1\widetilde{C}_3(1+\alpha)|a|+\widetilde{C}_2|a|<1
\end{align*}
which leads us to $v=\hat{v}$. The proof is complete.
\end{proof}
We note that Proposition \ref{thm2} implies
\begin{align}\label{cw*r}
t^{\frac{1}{2}-\frac{3}{2r}}v\in 
BC_{w}\big((0,\infty);L^r_{\sigma}(D)\big)
\end{align}
for all $r\in (3,q)$ by the interpolation inequality
\begin{align*}
\|f\|_r\leq C\|f\|_{3,\infty}^{1-\beta}\|f\|_{q,\infty}^\beta,
\quad 
\frac{1}{r}=\frac{1-\beta}{3}+\frac{\beta}{q},
\end{align*}
see (\ref{realinterpolation}). 
\par Let $q\in (6,\infty),$ then  
the solution obtained in Proposition \ref{thm2} 
also fulfills the following decay properties.
\begin{prop}\label{linftydecay}
\par Let $\psi$ be a function on $\R$ satisfying (\ref{psidef}) and 
we put $\alpha:=\displaystyle\max_{t\in\R}|\psi'(t)|$. 
Suppose that $6<q<\infty$. Then, under the same condition 
as in the latter part of 
Proposition \ref{thm2}, the solution $v$ obtained in 
Proposition \ref{thm2} satisfies $v(t)\in L^r(D)~(t>0)$ with 
\begin{align}\label{q<decay}
\|v(t)\|_{r}=O(t^{-\frac{1}{2}+\frac{3}{2q}})
\quad {\rm{as}}~t\rightarrow \infty
\end{align}
for $r\in (q,\infty]$. 
\end{prop}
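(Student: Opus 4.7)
The plan is first to establish the $L^\infty$ decay $\|v(t)\|_\infty = O(t^{-1/2+3/(2q)})$ by duality against $L^1$-test functions in the weak formulation (\ref{NS6}), then to derive the $L^r$ decay for $r \in (q,\infty)$ by real interpolation between $L^{q,\infty}$ (known from Proposition \ref{thm2}) and $L^\infty$. To obtain the $L^\infty$ bound, I would fix $\varphi \in C_{0,\sigma}^\infty(D)$ with $\|\varphi\|_1 \leq 1$, write $(v(t),\varphi)$ via (\ref{NS6}), and estimate the three resulting integrals using the $L^1$--$L^{r,\rho}$ bounds of Lemma \ref{lem,infty,1} for $T(t,\tau)^*P$ and $\nabla T(t,\tau)^*P$.

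For the quadratic term $(v\otimes v,\nabla T^*P\varphi)$, Lorentz--H\"older with $v(\tau)\in L^{q,\infty}$ on both slots places $v\otimes v\in L^{q/2,\infty}$ with norm controlled by $C\tau^{-1+3/q}$; pairing with $\nabla T^*P\varphi\in L^{q/(q-2),1}$ (admissible in Lemma \ref{lem,infty,1}(3) because $q/(q-2)\leq 3$) produces the singularity $(t-\tau)^{-1/2-3/q}$, which is integrable thanks to $q>6$, and the standard dyadic split $[0,t/2]\cup[t/2,t]$ yields an $O(t^{-1/2})$ contribution. For the forcing term $(H(\tau),T^*P\varphi)$, definitions (\ref{h}) and (\ref{psidef}) give $H(\tau)=0$ for $\tau\geq 1$, so together with (\ref{hstaest2}) and Lemma \ref{lem,infty,1}(2) the bound $\int_0^1 (t-\tau)^{-1/2}\,d\tau = O(t^{-1/2})$ (for $t\geq 2$) is immediate.

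The delicate term is the linear contribution $(v\otimes u_s,\nabla T^*P\varphi)$ and its symmetric partner, in which both factors sit at the scale-critical $L^{3,\infty}$ level. I would split $\int_0^t=\int_0^{t/2}+\int_{t/2}^t$: on $[0,t/2]$, use the uniform bound $\|v(\tau)\|_{3,\infty}\leq C|a|$ paired with $\|u_s\|_{3,\infty}\leq C|a|$ and $\nabla T^*P\varphi\in L^{3,1}$, absorbing the $(t-\tau)^{-3/2}$ singularity via $(t-\tau)\geq t/2$ to obtain $Ct^{-1/2}$; on $[t/2,t]$, use the $X_q$-decay $\|v(\tau)\|_{q,\infty}\leq Ct^{-1/2+3/(2q)}$ together with the pointwise estimate $|u_s(x)|\leq C|a|/(1+|x|)$, which places $u_s\in L^{p,1}$ for every $p>3$. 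The main obstacle is precisely this second piece: a naive Lorentz--H\"older pairing at the critical scaling yields a borderline $(t-\tau)^{-1}$ singularity that is not integrable near $\tau=t$, so a careful choice of $p$ close to the critical value $3q/(q-3)$, exploiting the finer Lorentz information $u_s\in L^{p,1}$ together with the dyadic split, is needed to restore integrability while still matching the sharp rate $t^{-1/2+3/(2q)}$.

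Once the $L^\infty$ bound is established, the real interpolation identity $L^r = (L^{q,\infty},L^\infty)_{1-q/r,r}$ yields
$$\|v(t)\|_r \leq C\|v(t)\|_{q,\infty}^{q/r}\|v(t)\|_\infty^{1-q/r} \leq Ct^{-1/2+3/(2q)}$$
for every $r\in(q,\infty)$, completing the proof; the pointwise statement $v(t)\in L^r(D)$ follows from the duality bound since $L^{r'}_\sigma(D)^{\,*} = L^r_\sigma(D)$.
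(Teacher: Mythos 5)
Your overall architecture --- testing the weak formulation (\ref{NS6}) against $L^1$-normalized $\varphi$ via the $L^1$--$L^{r,\rho}$ estimates of Lemma \ref{lem,infty,1} for the adjoint evolution operator, then interpolating between $L^{q,\infty}$ and $L^\infty$ --- is exactly the paper's, and your treatment of the quadratic term, of the forcing term $H$ (supported in $\tau\in[0,1]$), and of the final interpolation step is correct. The gap sits in the linear terms $(v\otimes u_s,\nabla T(t,\tau)^*P\varphi)$ and its partner, precisely where you flag ``the main obstacle'': you identify the borderline difficulty correctly, but the remedy you sketch does not produce the claimed rate. If you pair $v(\tau)\in L^{q,\infty}$ with $u_s\in L^{p,1}$ for a fixed $p>3q/(q-3)$, then (\ref{gradl1lrad}) yields the kernel $\tau^{-\frac12+\frac{3}{2q}}(t-\tau)^{-\frac12-\frac{3}{2q}-\frac{3}{2p}}$, and integrating over $[t/2,t]$ gives $Ct^{-\frac{3}{2p}}$; since integrability forces $\frac{3}{2p}<\frac12-\frac{3}{2q}$ \emph{strictly}, the two-piece split $[0,t/2]\cup[t/2,t]$ can only ever deliver $O(t^{-\frac12+\frac{3}{2q}+\varepsilon})$ --- Koba's rate, the very one Remark \ref{rmk3} says this proposition improves upon. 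Passing to the finer Lorentz exponent $u_s\in L^{p,1}$ does not help, because it sharpens only the spatial pairing and leaves the temporal exponent $-\frac{3}{2p}$ untouched, while letting $p\downarrow 3q/(q-3)$ blows up the constant.

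The missing device is a third cut at $\tau=t-1$, as in (\ref{split}): $\int_0^t=\int_0^{t/2}+\int_{t/2}^{t-1}+\int_{t-1}^t$ for $t>2$. On $[t/2,t-1]$ one uses the \emph{critical} norm $\|u_s\|_{3,\infty}$ together with $[v]_{q,\infty}$, which produces the non-integrable singularity $(t-\tau)^{-1-\frac{3}{2q}}$ --- harmless there because $t-\tau\geq1$, so its integral is an absolute constant and the prefactor $\tau^{-\frac12+\frac{3}{2q}}\leq Ct^{-\frac12+\frac{3}{2q}}$ carries the sharp rate. On the unit-length interval $[t-1,t]$ one switches to the subcritical pairing $\|u_s\|_{q,\infty}[v]_{q,\infty}$ with the integrable singularity $(t-\tau)^{-\frac12-\frac{3}{q}}$ (this is where $q>6$ enters), whose integral over an interval of length one is again an absolute constant. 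Without this extra cut your argument proves only the $\varepsilon$-lossy version of (\ref{q<decay}).
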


\begin{proof}
We first show (\ref{q<decay}) with $r=\infty$, that is, 
$v(t)\in L^\infty(D)$ for $t>0$ with 
\begin{align}\label{inftydecay}
\|v(t)\|_{\infty}=O(t^{-\frac{1}{2}+\frac{3}{2q}})
\quad {\rm{as}}~t\rightarrow \infty. 
\end{align}
We note by continuity that $C^\infty_{0,\sigma}(D)$ can be 
replaced by $PC_0^{\infty}(D)$ 
as the class of test functions in (\ref{NS6}). 
Hence, it follows that 
\begin{align}\label{thm1pf6}
\sup_{\varphi\in C_0^{\infty}(D),\|\varphi\|_1\leq 1}|(v(t),\varphi)|
\leq N_1+N_2+N_3+N_4,
\end{align}
where
\begin{align*}
&N_1=\sup_{\varphi\in C_0^{\infty}(D),\|\varphi\|_1\leq 1}
\int_0^t |(v(\tau)\otimes v(\tau),\nabla T(t,\tau)^*P\varphi)|\,d\tau,\\
&N_2=\sup_{\varphi\in C_0^{\infty}(D),\|\varphi\|_1\leq 1}
\int_0^t |(v(\tau)\otimes u_s,\nabla T(t,\tau)^*P\varphi)|\,d\tau,\\
&N_3=\sup_{\varphi\in C_0^{\infty}(D),\|\varphi\|_1\leq 1}
\int_0^t |u_s\otimes v(\tau),\nabla T(t,\tau)^*P\varphi)|\,d\tau,\\
&N_4=
\sup_{\varphi\in C_0^{\infty}(D),\|\varphi\|_1\leq 1}
\int_0^t |(H(\tau),T(t,\tau)^*P\varphi)|\,d\tau.
\end{align*}
We begin by considering $N_1$. 
In view of (\ref{gradl1lrad}), we have
\begin{align*}
\int_0^t|(v(\tau)\otimes v(\tau),\nabla T(t,\tau)^*P\varphi)|\,d\tau
&\leq C[v]_{q,\infty}^2
\int_0^t\tau^{-1+\frac{3}{q}}
\left\|
\nabla T(t,\tau)^*P\varphi\right\|_{(1-\frac{2}{q})^{-1},1}\,d\tau\\
&\leq C[v]_{q,\infty}^2\int_0^t 
\tau^{-1+\frac{3}{q}}(t-\tau)^{-\frac{3}{q}-\frac{1}{2}}\,d\tau
\,\|\varphi\|_1\\&
\leq C[v]_{q,\infty}^2t^{-\frac{1}{2}}\|\varphi\|_1
\end{align*}
for all $\varphi\in C_0^{\infty}(D)$ and $t>0$. 
Here, the integrability is ensured because of $q\in (6,\infty)$. 
Hence we obtain 
\begin{align}\label{thm1pf7}
N_1\leq Ct^{-\frac{1}{2}}\quad {\rm{for}} ~t>0.
\end{align}
We next consider $N_2$. 
By applying (\ref{gradl1lrad}), it follows that 
\begin{align*}
\int_0^t|(v(\tau)\otimes u_s,\nabla T(t,\tau)^*P\varphi)|\,d\tau&\leq 
[v]_{q,\infty}\|u_s\|_{q,\infty}\int_0^t\tau^{-\frac{1}{2}+\frac{3}{2q}}
\|\nabla T(t,\tau)^*P\varphi\|_{(1-\frac{2}{q})^{-1},1}\,d\tau
\nonumber\\
&\leq C [v]_{q,\infty}\|u_s\|_{q,\infty}t^{-\frac{3}{2q}}\|\varphi\|_1
\end{align*}
for $t>0$. We thus have 
\begin{align}
N_2\leq Ct^{-\frac{3}{2q}}\quad {\rm{for}} ~t>0.
\end{align}
We next intend to derive the rate of decay $N_2$ 
as fast as possible. To this 
end, we split the integral into 
\begin{align}\label{split}
\int_0^t |(v(\tau)\otimes u_s,\nabla T(t,\tau)^*P\varphi)|\,d\tau
=\int_0^{\frac{t}{2}}+\int^{t-1}_{\frac{t}{2}}+\int_{t-1}^t
\end{align}
for $t>2$. We apply (\ref{gradl1lrad}) again to find 
\begin{align*}
\int_0^{\frac{t}{2}}&
\leq \|u_s\|_{3,\infty}\|v\|_X
\int_0^{\frac{t}{2}}\|\nabla T(t,\tau)^*P\varphi\|_{3,1}\,d\tau
\leq Ct^{-\frac{1}{2}}\|\varphi\|_1,\\
\int_{\frac{t}{2}}^{t-1}&\leq\|u_s\|_{3,\infty}[v]_{q,\infty}
\int_{\frac{t}{2}}^{t-1}\tau^{-\frac{1}{2}+\frac{3}{2q}}
\|\nabla T(t,\tau)^*P\varphi\|_{(1-\frac{1}{3}-\frac{1}{q})^{-1},1}\,d\tau
\leq Ct^{-\frac{1}{2}+\frac{3}{2q}}\|\varphi\|_1
\end{align*}
and
\begin{align*}
\int_{t-1}^{t}\leq \|u_s\|_{q,\infty}[v]_{q,\infty}
\int_{t-1}^t\tau^{-\frac{1}{2}+\frac{3}{2q}}\|
\nabla T(t,\tau)^*P\varphi\|_{(1-\frac{2}{q})^{-1},1}\,d\tau
\leq Ct^{-\frac{1}{2}+\frac{3}{2q}}\|\varphi\|_1
\end{align*}
for all $\varphi\in C_0^{\infty}(D)$ and $t>2$. 
Summing up the estimates above, we are led to 
\begin{align}\label{thm1pf8}
N_2\leq Ct^{-\frac{1}{2}+\frac{3}{2q}}\quad {\rm{for}} ~t>2.
\end{align}
Similarly, we have
\begin{align}
&N_3\leq Ct^{-\frac{3}{2q}}\quad {\rm{for}} ~t>0,\\
&N_3\leq Ct^{-\frac{1}{2}+\frac{3}{2q}}\quad {\rm{for}} ~t>2.
\end{align}
It is easily seen from (\ref{psidef}), 
(\ref{hstaest2}) and (\ref{l1lr2ad}) that 
\begin{align*}
\int_0^t |(H(\tau),T(t,\tau)^*P\varphi)|\,d\tau
&\leq 
C(a^2+\alpha|a|)\int_0^{\min\{1,t\}}
\|T(t,\tau)^*P\varphi\|_{\frac{3}{2},1}\,d\tau\\
&\leq C(a^2+\alpha|a|)\int_0^{\min\{1,t\}} 
(t-\tau)^{-\frac{1}{2}}\,d\tau\,\|\varphi\|_1
\end{align*} 
for all $\varphi\in C_0^{\infty}(D)$ and $t>0$, which yields 
\begin{align} 
&N_4\leq Ct^{\frac{1}{2}}\quad {\rm{for}} ~t>0,\\
&N_4\leq Ct^{-\frac{1}{2}}\quad {\rm{for}} ~t>2.\label{thm1pf9}
\end{align}
Combining (\ref{thm1pf6})--(\ref{thm1pf9}) 
implies $v(t)\in L^\infty(D)$ ($t>0$) and 
(\ref{inftydecay}). In view of the interpolation relation 
\begin{align*}
(L^{q,\infty}(D),L^{\infty}(D))_{1-\frac{q}{r},r}=L^r(D),\quad 
\|f\|_r\leq C\|f\|^{\frac{q}{r}}_{q,\infty}
\|f\|^{1-\frac{q}{r}}_\infty,\quad q<r<\infty, 
\end{align*}
we obtain (\ref{q<decay}) for $r\in (q,\infty]$ as well. 
This completes the proof.
\end{proof}

\begin{rmk}\label{rmk3}
When the stationary solution possesses the scale-critical rate 
$O(1/|x|)$, the $L^\infty$ decay of perturbation with less sharp 
rate $O(t^{-\frac{1}{2}+\varepsilon})$ was derived first by 
Koba \cite{koba2017} in the context of stability analysis, where 
$\varepsilon>0$ is arbitrary. If we have a look only at the 
$L^\infty$ decay rate, our rate is comparable with his result 
since $q\in (6,\infty)$ is arbitrary. However, we are not able to 
prove Proposition \ref{linftydecay} by his method. 
This is because he doesn't split the integrals in $N_2$ and $N_3$, 
so that the rate of $L^\infty$ decay is slower than the one of 
$L^{q,\infty}$ decay. From this point of view, Proposition 
\ref{linftydecay} is regarded as a slight improvement of his result. 
\end{rmk}

We next show that the solution $v$ obtained in Proposition \ref{thm2} 
actually satisfies the integral equation 
(\ref{integraleq}) by 
identifying $v$ with a local solution $\widetilde{v}$ of (\ref{integraleq}) 
in a neighborhood of each time $t>0$. 
To this end, we need the following lemma on the uniqueness. 
The proof is similar to the argument in the second half 
(after (\ref{v1v22})) of the proof of Theorem \ref{thm3} and thus we may 
omit it.

\begin{lem}\label{uniqueness}
Let $3<r<\infty,~0\leq t_0<t_1<\infty$ 
and $v_0\in L^r_{\sigma}(D)$. Then the problem
\begin{align}\label{NS7}
(v(t),\varphi)=(v_0,T(t,t_0)^*\varphi)
&+\int_{t_0}^t\big(v(\tau)\otimes v(\tau)
+\psi(\tau)\{v(\tau)\otimes u_s+u_s\otimes v(\tau)\},
\nabla T(t,\tau)^*\varphi\big)\,d\tau\nonumber\\&
+\int_{t_0}^t(H(\tau),T(t,\tau)^*\varphi)\,d\tau,\quad 
\forall \varphi\in C^{\infty}_{0,\sigma}(D)
\end{align}
on $(t_0,t_1)$ admits at most one solution within the class 
$L^{\infty}(t_0,t_1;L_{\sigma}^r(D))$. 
Here, $H$ is given by (\ref{h}). 
\end{lem}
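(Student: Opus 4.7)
The plan is to imitate the iteration argument carried out after (\ref{v1v22}) in the proof of Theorem \ref{thm3}, now localized to the interval $(t_0, t_1)$. Given two solutions $v_1, v_2$ in the class $L^{\infty}(t_0, t_1; L^r_{\sigma}(D))$, I first subtract the two instances of (\ref{NS7}): since they share the same initial value $v_0$ and the same forcing $H$, the pairings $(v_0, T(t,t_0)^*\varphi)$ and $\int_{t_0}^t (H(\tau), T(t,\tau)^*\varphi)\,d\tau$ are common to both and cancel, leaving an equation for $v_1 - v_2$ of exactly the bilinear shape of (\ref{v1v22}) on $(t_0, t)$.

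Next I dualize to the $L^r$ norm. Since $C^{\infty}_{0,\sigma}(D)$ is dense in $L^{r'}_{\sigma}(D)$ (as $r' < \infty$), it suffices to test against $\varphi \in C^{\infty}_{0,\sigma}(D)$ with $\|\varphi\|_{r'} \leq 1$. For each bilinear term I apply H\"older with $\tfrac{1}{r} + \tfrac{1}{r} + (1 - \tfrac{2}{r}) = 1$, noting that for $r > 3$ the exponent $r/(r-2)$ satisfies $r' \leq r/(r-2) < 3$, so (\ref{adgrad0}) applies with $q = r'$ and gives
\begin{align*}
\|\nabla T(t,\tau)^*\varphi\|_{r/(r-2)} \leq C(t-\tau)^{-1/2 - 3/(2r)}\|\varphi\|_{r'}.
\end{align*}
The singular kernel is integrable precisely because $r > 3$, producing a time factor $(t-t_0)^{1/2 - 3/(2r)}$. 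The terms involving $u_s$ are treated in the same way using $|\psi| \leq 1$ and $\|u_s\|_r < \infty$, which follows from the pointwise bound in Proposition \ref{galdi2003}.

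With $[v]_{r,t_0,t} := \sup_{t_0 \leq \tau \leq t}\|v(\tau)\|_r$ and $C_* = C([v_1]_{r,t_0,t_1} + [v_2]_{r,t_0,t_1} + \|u_s\|_r)$, the previous step yields
\begin{align*}
[v_1 - v_2]_{r, t_0, t} \leq C_*(t - t_0)^{1/2 - 3/(2r)} [v_1 - v_2]_{r, t_0, t}, \qquad t \in (t_0, t_1).
\end{align*}
Choosing $\xi = \min\bigl\{(2C_*)^{-(1/2 - 3/(2r))^{-1}},\, t_1 - t_0\bigr\}$ forces $v_1 = v_2$ on $(t_0, t_0 + \xi)$. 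Because $C_*$ is an a priori constant depending only on the full-interval $L^{\infty}_t L^r_x$ norms, the identical argument restarted at $t_0 + \xi, t_0 + 2\xi, \dots$ covers $(t_0, t_1)$ in finitely many steps.

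There is no genuine obstacle here; the scheme is a direct transcription of the Fujita--Kato uniqueness argument already used in Theorem \ref{thm3}, the only structural novelty being the cancellation of the shared data and forcing terms. The two conditions that the calculation needs --- integrability of $(t-\tau)^{-1/2 - 3/(2r)}$ and finiteness of $\|u_s\|_r$ --- both translate to the hypothesis $r > 3$, which is exactly what is assumed.
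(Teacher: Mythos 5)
Your proposal is correct and is essentially the paper's own argument: the paper omits the proof of this lemma precisely because it is the iteration from the second half of the proof of Theorem \ref{thm3} (after (\ref{v1v22})), with the same cancellation of the initial-data and $H$ terms, the same H\"older splitting $\tfrac{1}{r}+\tfrac{1}{r}+(1-\tfrac{2}{r})=1$ combined with (\ref{adgrad0}), the same factor $(t-t_0)^{1/2-3/(2r)}$, and the same stepwise continuation. No gaps.
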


Given $v_0\in L^r_{\sigma}(D)$ with $r\in (3,\infty)$, let us 
construct a local solution of the integral equation
\begin{align}\label{intt0}
v(t)=T(t,t_0)v_0+\int_{t_0}^t &T(t,\tau)P[(Gv)(\tau)+H(\tau)]\,d\tau,
\end{align} 
where $G$ and $H$ are defined by (\ref{g}) and (\ref{h}), 
respectively. For $0\leq t_0<t_1<\infty$ and $r\in (3,\infty)$, 
we define the function space
\begin{align}
&Y_r(t_0,t_1)=\big\{v\in C\big([t_0,t_1];L^{r}_{\sigma}(D)\big)
\,\big|\,
(\cdot-t_0)^{\frac{1}{2}}\nabla v(\cdot)\in 
BC_w\big((t_0,t_1];L^r(D)\big)\big\}
\end{align}
which is a Banach space equipped with norm 
\begin{align}
&\|v\|_{Y_r(t_0,t_1)}=\sup_{t_0\leq\tau\leq t_1}\|v(\tau)\|_r
+\sup_{t_0<\tau\leq t_1}
(\tau-t_0)^{\frac{1}{2}}\|\nabla v(\tau)\|_r\label{Yr}
\end{align}
and set 
\begin{align}
&U_1(v,w)(t)=\int_{t_0}^t 
T(t,\tau)P[v(\tau)\cdot \nabla w(\tau)]\,d\tau,\quad 
U_2(v)(t)=\int_{t_0}^t T(t,\tau)P[\psi(\tau)v(\tau)\cdot \nabla u_s]\,d\tau,\nonumber\\
&U_3(v)(t)=\int_{t_0}^t T(t,\tau)
P[\psi(\tau)u_s\cdot \nabla v(\tau)]\,d\tau,\quad 
U_4(t)=\int_{t_0}^t T(t,\tau)PH(\tau)\,d\tau.\label{Udef}
\end{align}

\begin{lem}\label{localkey}
Let $3<r<\infty$ and $0\leq t_0<t_1\leq t_0+1$. 
Suppose that $v,w\in Y_r(t_0,t_1)$. 
Then $U_1(v,w),U_2(v),U_3(v),U_4\in Y_r(t_0,t_1)$. 
Furthermore, there exists a constant $C=C(r,t_0)$ such that 
\begin{align}
&\|U_1(v,w)\|_{Y_r(t_0,t)}\leq C(t-t_0)^{\frac{1}{2}-\frac{3}{2r}}
\|v\|_{Y_r(t_0,t)}\|w\|_{Y_r(t_0,t)\,,}\label{Y1}\\
&\|U_2(v)\|_{Y_r(t_0,t)}\leq 
C(t-t_0)^{1-\frac{3}{2r}}\|\nabla u_s\|_r
\|v\|_{Y_r(t_0,t)\,,}\label{Y2}\\
&\|U_3(v)\|_{Y_r(t_0,t)}\leq C(t-t_0)^{\frac{1}{2}-\frac{3}{2r}}
\|u_s\|_r\|v\|_{Y_r(t_0,t)\,,}\label{Y3}\\
&\|U_4\|_{Y_r(t_0,t)}
\leq C(t-t_0)^{\frac{1}{2}+\frac{3}{2r}}(a^2+\alpha|a|)\label{Y4}
\end{align}
for all $t\in (t_0,t_1].$
\end{lem}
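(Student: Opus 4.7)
I would decompose the $Y_r$ norm into the sup of $\|\cdot\|_r$ and the sup of $(\tau-t_0)^{1/2}\|\nabla\cdot\|_r$, and bound each of the four operators separately by combining H\"older's inequality on the integrand with the $L^q$--$L^r$ and gradient estimates from Proposition~\ref{harhprop}(3) and Proposition~\ref{hishidaprop}. Because $t_1\leq t_0+1$, the upper time bound $\T$ in Proposition~\ref{harhprop} can be fixed in terms of $t_0$, which is the source of the $t_0$-dependence in the constant $C(r,t_0)$.

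For $U_1(v,w)$, $U_2(v)$, and $U_3(v)$, the integrand has the form $f\cdot\nabla g$ with $f,\nabla g\in L^r(D)$, so H\"older gives $\|f\cdot\nabla g\|_{r/2}\leq\|f\|_r\|\nabla g\|_r$, and $r/2>3/2$. I would then apply (\ref{lqlr0}) with exponents $q=r/2$ and $r$ to get
$$\|T(t,\tau)P(f\cdot\nabla g)\|_r\leq C(t-\tau)^{-3/(2r)}\|f\|_r\|\nabla g\|_r,$$
and Proposition~\ref{harhprop}(3) with the same exponents to get
$$\|\nabla T(t,\tau)P(f\cdot\nabla g)\|_r\leq C(t-\tau)^{-3/(2r)-1/2}\|f\|_r\|\nabla g\|_r.$$
Inserting the respective pairs $(f,g)=(v,w)$, $(v,u_s)$, $(u_s,v)$ and using the $Y_r$-norm controls reduces the time integrals to Beta-type integrals $\int_{t_0}^t(t-\tau)^{-\alpha}(\tau-t_0)^{-\beta}\,d\tau=B(1-\alpha,1-\beta)(t-t_0)^{1-\alpha-\beta}$. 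The integrability condition $\alpha<1$ for the gradient integrals of $U_1$ and $U_3$ is exactly the hypothesis $r>3$, and a direct computation produces the scalings in (\ref{Y1})--(\ref{Y3}).

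For $U_4$, I would use (\ref{hstaest2}) together with Proposition~\ref{hishidaprop}(3) with $q=3$, $\rho_1=\infty$, $\rho_2=r$ (so that $L^{r,r}(D)=L^r(D)$) to obtain $\|T(t,\tau)PH(\tau)\|_r\leq C(t-\tau)^{-1/2+3/(2r)}(a^2+\alpha|a|)$, and Proposition~\ref{harhprop}(4) with the same choices to obtain $\|\nabla T(t,\tau)PH(\tau)\|_r\leq C(t-\tau)^{-1+3/(2r)}(a^2+\alpha|a|)$. Since $r>3$, both time exponents exceed $-1$, and integration yields (\ref{Y4}). The continuity assertions then follow standard lines: each $U_j$ vanishes at $t=t_0$, and the bounds above imply $\|U_j(t)\|_r\to 0$ as $t\downarrow t_0$; for $t\in(t_0,t_1]$, strong $L^r$-continuity is obtained by splitting the integral at $\tau=t-\varepsilon$, using strong continuity of $T(t,\tau)$ on $L^{r/2}_\sigma(D)$ (respectively $L^{3,r}_\sigma(D)$) on the lower part and smallness of the $\varepsilon$-window on the upper part, while weak continuity of $(\cdot-t_0)^{1/2}\nabla U_j(\cdot)$ in $L^r(D)$ follows from the uniform bound together with a duality/density argument analogous to the one used in the proof of Lemma~\ref{key}.

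I expect the computational parts to be essentially routine; the only delicate point is matching the indices to the exact hypotheses of the propositions in Section~3, particularly that Proposition~\ref{harhprop}(3) is applied with $r>3$, a regime in which the constant is permitted to depend on an upper time bound, and this is precisely why the constant in the lemma must carry a dependence on $t_0$.
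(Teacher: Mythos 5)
Your proposal is correct and follows essentially the same route as the paper: H\"older to place the nonlinear integrands in $L^{r/2}$, the global $L^q$--$L^r$ estimate (\ref{lqlr0}) (resp.\ the Lorentz estimates (\ref{lqlr2}), (\ref{gradT2}) for $U_4$ with data measured via (\ref{hstaest2})) for the $L^r$ part, the local-in-time gradient estimate (\ref{gradT}) with $\T=t_0+1$ for the $\nabla$ part (which is indeed where the $t_0$-dependence of $C$ enters), Beta-type time integrals, and a split-the-integral argument plus the duality identity $(\nabla U_j(t)-\nabla U_j(t_2),\varphi)=-(U_j(t)-U_j(t_2),\nabla\cdot\varphi)$ for the continuity claims. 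The index choices and resulting powers of $(t-t_0)$ all check out against (\ref{Y1})--(\ref{Y4}).
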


\begin{proof}
In view of (\ref{lqlr0}), we have 
\begin{align}
\|U_1(t)\|_r\leq C\int_{t_0}^t (t-\tau)^{-\frac{3}{2r}}
\|v\|_r\|\nabla w\|_r\,d\tau
\leq C(t-t_0)^{-\frac{3}{2r}+\frac{1}{2}}
\|v\|_{Y_r(t_0,t)}\|w\|_{Y_r(t_0,t)}.\label{Y11}
\end{align}
Furthermore, (\ref{gradT}) with $\T=t_0+1$ yields 
\begin{align}\label{Y12}
\|\nabla U_1(t)\|_r
\leq C(t-t_0)^{-\frac{3}{2r}}\|v\|_{Y_r(t_0,t)}
\|w\|_{Y_r(t_0,t)}.
\end{align}
By (\ref{Y11}) and (\ref{Y12}), we obtain (\ref{Y1}). 
Similarly, we can show (\ref{Y2})--(\ref{Y4}). 
We note that the estimate (\ref{Y4}) 
follows from (\ref{gradT2}) with $\T=t_0+1$ 
together with (\ref{hstaest2}).
\par We next show the continuity of $U_1$ with respect to $t$.  
Let $t_2\in [t_0,t_1]$. If $t_2<t,$ we have  
\begin{align*}
U_1(t)-U_1(t_2)&=\int_{t_0}^{t_2}(T(t,t_2)-1)
T(t_2,\tau)P[v(\tau)\cdot\nabla w(\tau)]\,d\tau
+\int_{t_2}^t T(t,\tau)P[v(\tau)\cdot
\nabla w(\tau)]\,d\tau\\&=:U_{11}(t)+U_{12}(t).
\end{align*}
Lebesgue's convergence theorem yields that 
$\|U_{11}(t)\|_r\rightarrow 0$ as $t\rightarrow t_2$, while 
\begin{align*}
\|U_{12}(t)\|_r\leq C(t-t_2)^{\frac{1}{2}-\frac{3}{2r}}
\|v\|_{Y_r(t_0,t_1)}\|w\|_{Y_r(t_0,t_1)}
\rightarrow 0 \quad {\rm{as}}~t\rightarrow t_2.
\end{align*}
To discuss the case $t<t_2$, we need the following device. 
Let $(t_0+t_2)/2\leq \tilde{t}<t<t_2$, 
where $\tilde{t}$ will be determined later, then  
\begin{align*}
U_1(t)-U_1(t_2)=\left(\int_{t_0}^{\tilde{t}}+\int_{\tilde{t}}^t\right)
T(t,\tau)&P[v(\tau)\cdot \nabla w(\tau)]\,d\tau\\
&-\left(\int_{t_0}^{\tilde{t}}+\int_{\tilde{t}}^{t_2}\right)
T(t_2,\tau)P[v(\tau)\cdot \nabla w(\tau)]\,d\tau.
\end{align*}
We observe that 
\begin{align*}
\int_{\tilde{t}}^t&\|T(t,\tau)P[v(\tau)\cdot\nabla w(\tau)]\|_r\,d\tau
+\int_{\tilde{t}}^{t_2}
\|T(t_2,\tau)P[v(\tau)\cdot\nabla w(\tau)]\|_r\,d\tau\\&
\leq C\|v\|_{Y_r(t_0,t_1)}\|w\|_{Y_r(t_0,t_1)}\left(
\int_{\tilde{t}}^t(t-\tau)^{-\frac{3}{2r}}
(\tau-t_0)^{-\frac{1}{2}}\,d\tau+
\int_{\tilde{t}}^{t_2}(t_2-\tau)^{-\frac{3}{2r}}
(\tau-t_0)^{-\frac{1}{2}}\,d\tau\right)\\
&\leq\frac{2C}{1-\frac{3}{2r}}\|v\|_{Y_r(t_0,t_1)}\|w\|_{Y_r(t_0,t_1)}
\left(\frac{t_0+t_2}{2}-t_0\right)^{-\frac{1}{2}}
(t_2-\tilde{t}\,)^{1-\frac{3}{2r}}.
\end{align*}
For any $\varepsilon>0$, we choose $\tilde{t}$ such that 
\begin{align*}
\frac{2C}{1-\frac{3}{2r}}\|v\|_{Y_r(t_0,t_1)}\|w\|_{Y_r(t_0,t_1)}
\left(\frac{t_0+t_2}{2}-t_0\right)^{-\frac{1}{2}}
(t_2-\tilde{t}\,)^{1-\frac{3}{2r}}<\varepsilon
\end{align*}
which yields
\begin{align*}
\|U_1(t)-U_1(t_2)\|_r&\leq 
\int_{t_0}^{\tilde{t}}
\left\|\big(T(t,\tau)-T(t_2,\tau)\big)
P[v(\tau)\cdot\nabla w(\tau)]\right\|_r\,d\tau+\varepsilon
\quad {\rm{for~}}\tilde{t}<t<t_{2}
\end{align*}
and therefore, 
\begin{align}\label{111}
\limsup_{t\rightarrow t_2}\|U_1(t)-U_1(t_2)\|_r
\leq \limsup_{t\rightarrow t_2}\int_{t_0}^{\tilde{t}}
\left\|\big(T(t,\tau)-T(t_2,\tau)\big)
P[v(\tau)\cdot\nabla w(\tau)]\right\|_r\,d\tau+\varepsilon.
\end{align}
Since 
$\big\|\big(T(t,\tau)-T(t_2,\tau)\big)
P[v(\tau)\cdot\nabla w(\tau)]\big\|_r=
\big\|\big(T(t,\tilde{t}\,)-T(t_2,\tilde{t}\,)\big) 
T(\tilde{t},\tau)P[v(\tau)\cdot\nabla w(\tau)]\big\|_r$
tends to $0$ as 
$t\rightarrow t_2$ for $t_0<\tau<\tilde{t}$, 
it follows from Lebesgue's convergence theorem 
that the integral term in (\ref{111}) tends to $0$ as $t\rightarrow t_{2}$. 
Since $\varepsilon>0$ is arbitrary, we have
\begin{align}\label{Y1conti}
U_1\in C\big([t_0,t_1];L^r_{\sigma}(D)\big).
\end{align}
Furthermore, we find $\nabla U_1\in C_w\big((t_0,t_1];L^r(D)\big)$ 
on account of (\ref{Y12}) and (\ref{Y1conti}) 
together with the relation  
\begin{align*}
(\nabla U_1(t)-\nabla U_1(t_2),\varphi)
=-(U_1(t)-U_1(t_2),\nabla \cdot\varphi)
\end{align*}
for all $t_2\in (t_0,t_1]$ and 
$\varphi\in C_0^{\infty}(D)^{3\times 3}$. 
Since $U_2,U_3$ and $U_4$ are discussed similarly, 
the proof is complete.
\end{proof}

The following proposition provides a local solution of (\ref{intt0}).
\begin{prop}\label{localexistence}
Let $3<r<\infty$, $t_0\geq 0$ and $v_0\in L^r_{\sigma}(D)$. 
There exists $t_1\in (t_0,t_0+1]$ such that (\ref{intt0}) 
admits a unique solution $v\in Y_r(t_0,t_1)$.
Moreover, the length of the existence interval can be estimated 
from below by
\begin{align*}
t_1-t_0\geq \zeta(\|v_0\|_r),
\end{align*}
where $\zeta(\cdot):[0,\infty)\rightarrow (0,1)$ 
is a non-increasing function defined by (\ref{zeta}) below.
\end{prop}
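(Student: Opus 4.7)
The plan is to apply the Banach fixed-point theorem to the map
\[
\Phi(v)(t)=T(t,t_{0})v_{0}-U_{1}(v,v)(t)-U_{2}(v)(t)-U_{3}(v)(t)+U_{4}(t)
\]
on a suitable closed ball of $Y_{r}(t_{0},t_{1})$, exploiting that all the time-exponents appearing in Lemma~\ref{localkey} are strictly positive when $r>3$. Note that (\ref{intt0}) is exactly the fixed-point equation $v=\Phi(v)$, since $(Gv)=-v\cdot\nabla v-\psi\,v\cdot\nabla u_{s}-\psi\,u_{s}\cdot\nabla v$.

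First I would verify that the free term $t\mapsto T(t,t_{0})v_{0}$ belongs to $Y_{r}(t_{0},t_{1})$. The continuity in $L^{r}_{\sigma}(D)$ is the strong continuity of the evolution operator; the weighted gradient bound $(t-t_{0})^{1/2}\|\nabla T(t,t_{0})v_{0}\|_{r}\le C\|v_{0}\|_{r}$ is (\ref{gradT}) with $q=r$, applied on $[t_{0},t_{0}+1]$; and weak continuity of $\nabla T(\cdot,t_{0})v_{0}$ follows by approximating $v_{0}$ with elements of $Z_{r}(D)$ using the assertions 1-2 of Proposition~\ref{harhprop}. This yields $\|T(\cdot,t_{0})v_{0}\|_{Y_{r}(t_{0},t_{1})}\le C_{0}\|v_{0}\|_{r}$ with $C_{0}=C_{0}(r)$ independent of $t_{1}\in(t_{0},t_{0}+1]$.

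Next, set $R:=2C_{0}\|v_{0}\|_{r}$ and consider the closed ball $B_{R}=\{v\in Y_{r}(t_{0},t_{1}):\|v\|_{Y_{r}(t_{0},t_{1})}\le R\}$. Combining Lemma~\ref{localkey} with the trivial estimate above, for every $v,w\in B_{R}$,
\[
\|\Phi(v)\|_{Y_{r}(t_{0},t_{1})}\le \tfrac{R}{2}+C(t_{1}-t_{0})^{\frac{1}{2}-\frac{3}{2r}}\bigl(R^{2}+\|u_{s}\|_{r}R\bigr)+C(t_{1}-t_{0})^{1-\frac{3}{2r}}\|\nabla u_{s}\|_{r}R+C(t_{1}-t_{0})^{\frac{1}{2}+\frac{3}{2r}}(a^{2}+\alpha|a|)
\]
and an analogous Lipschitz estimate for $\Phi(v)-\Phi(w)$ in which the last (data) term is absent and $R^{2}$ is replaced by $2R\|v-w\|_{Y_{r}}$. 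Since $\tfrac{1}{2}-\tfrac{3}{2r}>0$, one may now choose $t_{1}-t_{0}$ so small that each of the last three summands on the right is $\le R/8$ and so that the constant in front of $\|v-w\|_{Y_{r}}$ in the Lipschitz estimate is $\le 1/2$. The critical requirement is the $R^{2}$-condition
\[
C(t_{1}-t_{0})^{\frac{1}{2}-\frac{3}{2r}}R \le \tfrac{1}{8},
\]
which gives a lower bound for $t_{1}-t_{0}$ depending only on $R$ (hence on $\|v_{0}\|_{r}$) and on the fixed quantities $r,t_{0},\|u_{s}\|_{r},\|\nabla u_{s}\|_{r},a,\alpha,\psi$. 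Taking the minimum of the resulting bounds and of $1$ defines a non-increasing function $\zeta(\|v_{0}\|_{r})\in(0,1)$, and the contraction mapping principle produces a unique fixed point of $\Phi$ in $B_{R}$.

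Finally, uniqueness in the whole space $Y_{r}(t_{0},t_{1})$ (not merely in $B_{R}$) follows from Lemma~\ref{uniqueness}: any $Y_{r}$-solution of (\ref{intt0}) is in particular an $L^{\infty}(t_{0},t_{1};L^{r}_{\sigma}(D))$-solution of (\ref{NS7}) with the same initial data, so it must coincide with the solution just constructed. I do not foresee a genuine obstacle here; the only point requiring some care is to ensure that the selected $t_{1}-t_{0}$ depends on $v_{0}$ only through its norm (which is automatic once $R$ is chosen as $2C_{0}\|v_{0}\|_{r}$) and that the function $\zeta$ is non-increasing, which follows because increasing $\|v_{0}\|_{r}$ only tightens the $R^{2}$-condition.
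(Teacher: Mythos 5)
Your overall strategy coincides with the paper's: both set up the fixed-point equation for the right-hand side of (\ref{intt0}) and run the contraction mapping principle on a closed ball of $Y_r(t_0,t_1)$, with Lemma \ref{localkey} supplying all the estimates and Lemma \ref{uniqueness} upgrading uniqueness from the ball to the whole class. Your preliminary verification that $T(\cdot,t_0)v_0\in Y_r(t_0,t_1)$ with norm $\leq C_0\|v_0\|_r$ is also fine.

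There is, however, one concrete defect: the choice $R=2C_0\|v_0\|_r$. The inhomogeneous term $U_4$ satisfies only $\|U_4\|_{Y_r(t_0,t)}\leq C(t-t_0)^{\frac{1}{2}+\frac{3}{2r}}(a^2+\alpha|a|)$, a bound independent of $R$, so your requirement that this summand be $\leq R/8$ forces $(t_1-t_0)^{\frac{1}{2}+\frac{3}{2r}}\leq R/\bigl(8C(a^2+\alpha|a|)\bigr)$. This makes the admissible existence time shrink to $0$ as $\|v_0\|_r\rightarrow 0$, and the scheme breaks down outright for $v_0=0$, since then $B_R=\{0\}$ while $\Phi(0)=U_4\neq 0$ in general. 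Consequently the function $\zeta$ you extract is not non-increasing and, worse, is not bounded below near $\|v_0\|_r=0$; this matters because the proof of Theorem \ref{thm1} iterates Proposition \ref{localexistence} on intervals of uniform length obtained by evaluating $\zeta$ at an upper bound for $\|v(t_0)\|_6$. The fix is standard and is what the paper effectively does: the radius must also absorb the size of the forcing, e.g.\ take $R=2C_0\|v_0\|_r+2C_3$ with $C_3$ a bound for the $U_4$-contribution (the paper chooses the smaller root $\Lambda$ of the associated quadratic, which obeys $\Lambda<4(C_3+C_4\|v_0\|_r)$ and in particular stays of order $C_3$ even for small data). With that single change your argument goes through and yields the stated monotone $\zeta$.
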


\begin{proof}
We put
\begin{align*}
(\Psi v)(t)={\rm{the~RHS~of~(\ref{intt0})}}.
\end{align*} 
By applying Lemma \ref{localkey}, we have 
\begin{align*}
&\|\Psi v\|_{Y_r(t_0,t)}\leq 
(C_1\|v\|_{Y_r(t_0,t)}^2
+C_2\|v\|_{Y_r(t_0,t)}
+C_3)(t-t_0)^{\frac{1}{2}-\frac{3}{2r}}+C_4\|v_0\|_r,\\
&\|\Psi v-\Psi w\|_{Y_r(t_0,t)}\leq
\{C_1(\|v\|_{Y_r(t_0,t)}
+\|w\|_{Y_r(t_0,t)})+C_2\}(t-t_0)^{\frac{1}{2}-\frac{3}{2r}}
\|v-w\|_{Y_r(t_0,t)}
\end{align*}
for all $t\in(t_0,t_0+1]$ and $v,w\in Y_r(t_0,t)$. 
We note that the constants $C_i$ may be 
dependent on $\|u_s\|_r$, $\|\nabla u_s\|_r$, $\alpha$ and $a$. 
We choose $t_1\in(t_0,t_0+1]$ such that 
\begin{align}
&C_2(t_1-t_0)^{\frac{1}{2}-\frac{3}{2r}}<\frac{1}{2},\label{t12}\\
&8C_1(t_1-t_0)^{\frac{1}{2}-\frac{3}{2r}}
\{C_3(t_1-t_0)^{\frac{1}{2}-\frac{3}{2r}}+C_4\|v_0\|_r\}
<\frac{1}{2}\label{t13}
\end{align}
which imply
\begin{align}\label{t11}
&\lambda:=\big\{1-C_2(t_1-t_0)^{\frac{1}{2}-\frac{3}{2r}}\big\}^2
-4C_1(t_1-t_0)^{\frac{1}{2}-\frac{3}{2r}}
\big\{C_3(t_1-t_0)^{\frac{1}{2}-\frac{3}{2r}}+C_4\|v_0\|_r\big\}>0.
\end{align}
We set  
\begin{align*}
&\Lambda:=\frac{1-C_2(t_1-t_0)^{\frac{1}{2}-\frac{3}{2r}}
-\sqrt{\lambda}}{2C_1(t_1-t_0)^{\frac{1}{2}-\frac{3}{2r}}}
<4(C_3+C_4\|v_0\|_r),\\ 
&Y_{r,\Lambda}(t_0,t_1):=\{v\in Y_r(t_0,t_1)\mid\|v\|_{Y_r(t_0,t_1)}
\leq \Lambda\}.
\end{align*}
Then we find that the map $\Psi:Y_{r,\Lambda}(t_0,t_1)\rightarrow 
Y_{r,\Lambda}(t_0,t_1)$ is well-defined and also contractive. 
Hence we obtain a local solution. 
Indeed, the conditions (\ref{t12}) and (\ref{t13}) are accomplished by 
\begin{align*} 
t_1-t_0<\min\left\{1,\left(\frac{1}{2C_2}\right)
^{(\frac{1}{2}-\frac{3}{2r})^{-1}},
\left(\frac{1}{16C_1(C_3+C_4\|v_0\|_r)}\right)
^{(\frac{1}{2}-\frac{3}{2r})^{-1}}\right\}.
\end{align*}
Thus, it is possible to take $t_1$ such that 
\begin{align}\label{zeta}
t_1-t_0\geq
\frac{1}{2}\min\left\{1,\left(\frac{1}{2C_2}\right)
^{(\frac{1}{2}-\frac{3}{2r})^{-1}},
\left(\frac{1}{16C_1(C_3+C_4\|v_0\|_r)}\right)
^{(\frac{1}{2}-\frac{3}{2r})^{-1}}\right\}=:\zeta(\|v_0\|_r).
\end{align}
The proof is complete. 
\end{proof}

\begin{lem}\label{betterregular} 
Let $3<r<\infty$, $t_0\geq 0$ and $v_0\in L^r_\sigma(D).$ 
The local solution $v$ obtained in Proposition \ref{localexistence} 
also possesses the following properties:
\begin{align}\label{regular}
v\in C\big((t_0,t_1];L^\kappa_\sigma(D)\big)
\cap C_{w^*}\big((t_0,t_1];L^\infty(D)\big)
\end{align}
for every $\kappa\in (r,\infty)$ and 
\begin{align}
\nabla v\in C_w\big((t_0,t_1];L^\gamma(D)\big) 
\end{align}
for every $\gamma\in(r,\infty)$ satisfying
\begin{align}\label{rlrelation}
\frac{2}{r}-\frac{1}{\gamma}<\frac{1}{3}.
\end{align}
\end{lem}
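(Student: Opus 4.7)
The plan is to bootstrap the regularity of $v\in Y_r(t_0,t_1)$ by restarting the integral equation (\ref{intt0}) at an arbitrary interior base point $t_*\in(t_0,t_1)$. Since the upgraded regularity will be established on $[t_*+\varepsilon,t_1]$ for every such $t_*$ and every $\varepsilon>0$, this yields the claim on the whole of $(t_0,t_1]$. The restart is essential because the bound $\|\nabla v(\tau)\|_r\le C(\tau-t_0)^{-1/2}$ built into $Y_r$ becomes uniform on $[t_*,t_1]$, so that $v\cdot\nabla v$, $v\otimes u_s$, and $u_s\otimes v$ all lie in $L^{r/2}\cap L^r$ uniformly on that interval.

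For the $L^\kappa$-regularity with $\kappa\in(r,\infty)$, test the restarted form of (\ref{intt0}) in $L^\kappa$ and apply (\ref{lqlr0}) from Proposition \ref{hishidaprop} to each summand. The homogeneous part is bounded by $C(t-t_*)^{-\frac{3}{2}(\frac{1}{r}-\frac{1}{\kappa})}\|v(t_*)\|_r$, while the quadratic contribution, written as $T(t,\tau)P[v\cdot\nabla v]$ with $v\cdot\nabla v\in L^{r/2}$, produces a time singularity $(t-\tau)^{-\frac{3}{2}(\frac{2}{r}-\frac{1}{\kappa})}$, which is integrable because $r>3$ forces the exponent to be strictly less than $1$. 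The terms involving $u_s$ and the source $H$ are handled similarly via Proposition \ref{galdi2003} and (\ref{hstaest2}). Continuity in $L^\kappa$ is then obtained by splitting the integral at the target time and invoking Lebesgue's dominated convergence exactly as was done for $U_1$ in the proof of Lemma \ref{localkey}.

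For the gradient statement $\nabla v\in L^\gamma$ with $\gamma\in(r,\infty)$ satisfying (\ref{rlrelation}), I repeat the same decomposition but replace (\ref{lqlr0}) by the gradient estimate (\ref{gradT}) of Proposition \ref{harhprop} with $\T=t_1$. The quadratic contribution then produces the singularity $(t-\tau)^{-\frac{3}{2}(\frac{2}{r}-\frac{1}{\gamma})-\frac{1}{2}}$, whose integrability is equivalent to (\ref{rlrelation}); this is exactly why that range of $\gamma$ appears. Weak continuity of $\nabla v$ in $L^\gamma$ follows from the uniform $L^\gamma$ bound together with $(\nabla v(t)-\nabla v(t'),\varphi)=-(v(t)-v(t'),\nabla\cdot\varphi)$ for test tensors $\varphi\in C_0^\infty(D)^{3\times 3}$ and the strong $L^\gamma$-continuity of $v$ furnished by the previous step.

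Finally, the $L^\infty$ bound and its weak-$\ast$ continuity are obtained by the duality template of Proposition \ref{linftydecay}: test the restarted (\ref{intt0}) against $\varphi\in C_0^\infty(D)$ with $\|\varphi\|_1\le 1$ and apply Lemma \ref{lem,infty,1} to $T(t,\tau)^*P\varphi$ and $\nabla T(t,\tau)^*P\varphi$. The decisive integral for the quadratic term takes the form $\int_{t_{**}}^{t}(t-\tau)^{-3/\kappa-1/2}\,d\tau$, which converges only for $\kappa>6$; since the preceding step provides $v\in L^\kappa$ for every finite $\kappa$, one more restart from $t_{**}>t_*$ with $\kappa>6$ fixed makes $\|v(\tau)\|_\kappa$ uniformly bounded on $[t_{**},t_1]$. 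Weak-$\ast$ continuity of $v$ with values in $L^\infty$ then follows from the uniform $L^\infty$ bound, the strong $L^\kappa$-continuity, and the density of $L^1(D)\cap L^{\kappa/(\kappa-1)}(D)$ in $L^1(D)$. The principal obstacle throughout is to orchestrate the restarts so that the semigroup-induced singularities $(t-\tau)^{-\alpha}$ never combine with the temporal singularities of $\|\nabla v(\tau)\|_r$ into a divergent integral; restarting at successively later interior points is the standard device that resolves this.
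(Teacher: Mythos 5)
Your argument is correct, but it is organized quite differently from the paper's, and in places it is more elaborate than necessary. The paper does not restart the integral equation at interior base points at all: it estimates (\ref{intt0}) directly from $t_0$, using the weight built into the $Y_r(t_0,t_1)$ norm, so that the singular factor $(\tau-t_0)^{-1/2}$ coming from $\|\nabla v(\tau)\|_r$ is simply absorbed into a convergent Beta-type integral against $(t-\tau)^{-\alpha}$ with $\alpha<1$; the resulting bounds blow up as $t\to t_0^{+}$, which is harmless since the claim is only on $(t_0,t_1]$. So your restarts are a legitimate device but not ``essential'' as you assert. The second, more substantive difference is the order of the steps. The paper first proves the $L^\infty$ bound directly, using the $L^q$-$L^\infty$ smoothing estimate (\ref{lqlr0}) (together with (\ref{lqlr2}) and the evolution property for the Lorentz-space data $H\in L^{3,\infty}$): since $v\cdot\nabla v\in L^{r/2}$ with $r/2>3/2$, one gets the singularity $(t-\tau)^{-3/r}$, integrable for $r>3$, with no duality argument and no restriction $\kappa>6$. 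The strong $L^\kappa$-continuity for $\kappa\in(r,\infty)$ is then obtained for free by interpolating between the $L^r$-continuity already contained in $Y_r(t_0,t_1)$ and the $L^\infty$-boundedness, via $\|f\|_\kappa\le\|f\|_r^{r/\kappa}\|f\|_\infty^{1-r/\kappa}$. You instead prove $L^\kappa$-boundedness and continuity for each finite $\kappa$ first and then pass to $L^\infty$ by testing against $L^1$ data and using Lemma \ref{lem,infty,1}, which forces the extra condition $\kappa>6$ and hence one more bootstrap; this works, but it imports the machinery of Proposition \ref{linftydecay} where the one-line smoothing estimate suffices. Your treatment of the gradient is essentially identical to the paper's: the condition (\ref{rlrelation}) arises exactly as the integrability threshold for $(t-\tau)^{-\frac{3}{2}(\frac{2}{r}-\frac{1}{\gamma})-\frac{1}{2}}$ in the quadratic term, and the weak continuity follows from the uniform $L^\gamma$ bound plus the duality identity with $\nabla\cdot\varphi$ and the already-established strong continuity of $v$. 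One small slip: the nonlinearity in (\ref{intt0}) is $v\cdot\nabla v$ (and $v\cdot\nabla u_s$, $u_s\cdot\nabla v$), not tensor products, and $v\cdot\nabla v\in L^r$ is not available before the $L^\infty$ bound is proved; only the $L^{r/2}$ membership is actually used, so this does not affect the argument.
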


\begin{proof}
By using (\ref{lqlr0}) and (\ref{lqlr2}) and the semigroup property, 
we find $v(t)\in L^{\infty}(D)$ with 
\begin{align}\label{inftybdd}
\|v(t)\|_\infty\leq C(t-t_0)^{-\frac{3}{2r}}\big\{\|v_0\|_r+
\|v\|^2_{Y_r(t_0,t_1)}+
\|v\|_{Y_r(t_0,t_1)}(\|u_s\|_r+\|\nabla u_s\|_r)+(a^2+\alpha|a|)
\big\}
\end{align}
for all $t\in (t_0,t_1]$. Moreover, for each $t_2\in(t_0,t_1]$, 
we know from $v\in C([t_0,t_1];L^r_\sigma(D))$ that 
\begin{align}
\big(v(t),\varphi\big)-\big(v(t_2),\varphi\big)
\rightarrow 0\quad {\rm as}~t\rightarrow t_2
\end{align}
for all $\varphi\in C_0^{\infty}(D)$, which combined with (\ref{inftybdd}) 
yields $v\in C_{w^*}\big((t_0,t_1];L^\infty(D)\big)$. Since
\begin{align*}
\|v(t)-v(t_2)\|_\kappa\leq \|v(t)-v(t_2)\|^{\frac{r}{\kappa}}_r
\|v(t)-v(t_2)\|_\infty^{1-\frac{r}{\kappa}}
\end{align*}
for $\kappa\in(r,\infty)$ and $t_2\in(t_0,t_1]$, 
it follows from (\ref{inftybdd}) that  
\begin{align}\label{contikappa1}
v\in C\big((t_0,t_1];L_\sigma^\kappa(D)\big)
\quad{\rm for}~\kappa\in (r,\infty).
\end{align} 
\par The estimates (\ref{gradT}) and (\ref{gradT2}) with $\T=t_0+1$ 
imply that if we assume (\ref{rlrelation}), 
we have $\nabla v(t)\in L^\gamma (D)$ with 
\begin{align}
\|\nabla v(t)\|_\gamma\leq
C(t-t_0)^{-\frac{3}{2}(\frac{1}{r}-\frac{1}{\gamma})-\frac{1}{2}}\big\{
\|v_0\|_r+\|v\|^2_{Y_r(t_0,t_1)}
&+\|v\|_{Y_r(t_0,t_1)}(\|u_s\|_r+\|\nabla u_s\|_r)\nonumber\\&
+(a^2+\alpha|a|)\big\}\label{nablabdd}
\end{align}
for all $t\in (t_0,t_1]$. 
Here, we note that (\ref{rlrelation}) is needed for estimates of  
$\nabla U_1$ and $\nabla U_3$ given in (\ref{Udef}). 
On account of (\ref{contikappa1}), (\ref{nablabdd}) and 
\begin{align*}
(\nabla v(t)-\nabla v(t_2),\varphi)
=-(v(t)-v(t_2),\nabla\cdot\varphi)
\end{align*}
for all $t_2\in (t_0,t_1]$ and $\varphi\in C_0^\infty(D)^{3\times 3},$ 
we find the weak continuity of $\nabla v$ with values in $L^{\gamma}(D)$. 
The proof is complete. 
\end{proof}

We close the paper with completion of the proof of Theorem \ref{thm1}. 
\vspace{0.2cm}\\
\noindent{\bf Proof of Theorem \ref{thm1}}\quad It remains to show that 
the solution $v$ obtained in Proposition \ref{thm2} 
also satisfies (\ref{integraleq}) with 
\begin{align}\label{contikappa}
v\in C\big((0,\infty);L^\kappa_\sigma(D)\big)\cap 
C_{w^*}\big((0,\infty);L^\infty(D)\big),\quad 
\nabla v\in C_w\big((0,\infty);L^\kappa(D)\big)
\end{align}
for all $3<\kappa<\infty$. 
Let $t_*\in (0,\infty).$ 
By applying Proposition \ref{localexistence} 
and Lemma \ref{betterregular} with $r=6$, 
we can see that for each $t_0\in [t_*/2,t_*)$, 
there exists $\widetilde{v}\in Y_6(t_0,t_1)$ 
which satisfies (\ref{intt0}) and therefore, (\ref{NS7}) with $v_0=v(t_0)$ 
such that
\begin{align*}
\widetilde{v}\in C\big((t_0,t_1];L^\kappa_\sigma(D)\big)\cap 
C_{w^*}\big((t_0,t_1];L^\infty(D)\big),\quad 
\nabla\, \widetilde{v}\in C_w\big((t_0,t_1];L^\kappa(D)\big)
\end{align*}
for all $\kappa\in [6,\infty)$. 
Moreover, the length of the existence interval can be estimated by 
\begin{align*}
t_1-t_0\geq \zeta(\|v(t_0)\|_6)\geq 
\zeta\left(C_5\left(\frac{t_*}{2}\right)^{-\frac{1}{4}}\right)
=:\varepsilon,
\end{align*}
where $\zeta(\cdot)$ is the non-increasing function 
in Proposition \ref{localexistence} because of 
\begin{align*}
\|v(t)\|_6\leq C_5\left(\frac{t_*}{2}\right)^{-\frac{1}{4}}
\end{align*}
for all $t\geq t_*/2$, see (\ref{cw*r}). 
We note that the solution $v$ obtained in Proposition \ref{thm2} 
also satisfies (\ref{NS7})  with $v_0=v(t_0)$ 
since $C^\infty_{0,\sigma}(D)$ can be replaced 
by $L^{6/5}_\sigma(D)$ as the class of test functions in (\ref{NS6}).  
Let us take $t_0:=\max\{t_*/2,t_*-\varepsilon/2\}$ 
so that $t_*\in (t_0,t_1)$, 
in which $v=\widetilde{v}$ on account of Lemma \ref{uniqueness}.  
Since $t_*$ is arbitrary, 
we conclude (\ref{contikappa}) for $\kappa\in [6,\infty)$. 
It is also proved by applying 
Proposition \ref{localexistence} with $r\in (3,6)$ 
that the solution belongs 
to the class (\ref{contikappa}) for $\kappa\in (3,6)$ as well.  
The proof is complete.\qed
\vspace{0.3cm}\\
\noindent{\bf{Acknowledgment.}}~~The author would like to 
thank Professor Toshiaki Hishida for valuable 
discussions to improve this paper.

\begin{align*}
\hspace{8cm}&\rm numa~adreso:\\
&\rm Graduate~School~of~Mathematics\\
&\rm Nagoya ~University\\
&\rm Furo\mbox{-}cho, Chikusa\mbox{-}ku\\
&\rm Nagoya, 464\mbox{-}8602 \\
&\rm Japan\\
&\rm E\mbox{-}mail: m17023c@math.nagoya\mbox{-}u.ac.jp
\end{align*}


\begin{thebibliography}{99}
\bibitem{belobook1976} 
Bergh, J. and L\"{o}fstr\"{o}m, J., 
Interpolation Spaces, Springer, Berlin, 1976.






\bibitem{bomi1995}

Borchers, W. and Miyakawa, T., 
On stability of exterior stationary Navier-Stokes flows, 
Acta Math. {\bf{174}} (1995), 311--382. 








\bibitem{fahi2007}
Farwig, R. and Hishida, T., 
Stationary Navier-Stokes flow around a rotating obstacle, 
Funkcial. Ekvac. {\bf{50}} (2007), 371--403.








\bibitem{fane2007}
Farwig, R., and Neustupa, J., 
On the spectrum of a Stokes-type operator 
arising from flow around a rotating body, 
Manuscripta Math. {\bf{122}} (2007), 419--437.


\bibitem{finn1965}
Finn, R., Stationary solutions of the Navier-Stokes equations, 
Proc, Symp. Appl. Math. {\bf{17}} (1965), 121--153.


\bibitem{fuka1964}
Fujita. H. and Kato, T., On the Navier-Stokes initial value problem. I, 
Arch. Rational Mech. Anal. {\bf{16}} (1964), 269--315.




\bibitem{fumo1977}
Fujiwara, D. and Morimoto, H., 
An $L_r$-theorem of the Helmholtz decomposition of vector fields, 
J. Fac. Sci. Univ. Tokyo Sect. IA Math {\bf{24}} (1977), 685--700. 





\bibitem{galdi2003}
Galdi, G.P., Steady flow of a Navier-Stokes fluid 
around a rotating obstacle, J. Elast. {\bf{71}} (2003), 1--31.



\bibitem{gahesh1997}
Galdi, G.P., Heywood, J.G. and Shibata, Y., 
On the global existence and convergence to steady state of 
Navier-Stokes flow past an obstacle that is started from rest, 
Arch. Rational Mech. Anal. {\bf{138}} (1997), 307--318.





\bibitem{harh2014}
Hansel, T. and Rhandi, A., 
The Oseen-Navier-Stokes flow in the exterior of a rotating obstacle: 
the non-autonomous case, J. Reine Angew. Math. {\bf{694}} (2014), 1--26.



\bibitem{heywood1972}
Heywood, J.G., 
The exterior nonstationary problem for the Navier-Stokes equations, 
Acta Math. {\bf{129}} (1972), 11--34



\bibitem{hishida1999}
Hishida, T., An existence theorem for the Navier-Stokes flow 
in the exterior of a rotating obstacle, 
Arch. Rational Mech. Anal. {\bf{150}} (1999), 307--348.






\bibitem{hishida2013}
Hishida, T., Mathematical analysis of the equations 
for incompressible viscous fluid around a rotating obstacle, 
Sugaku Expositions, {\bf{26}} (2013), 149--179.





\bibitem{hishida2018}
Hishida, T., Large time behavior of a generalized Oseen evolution operator, 
with applications to the Navier-Stokes flow past a rotating obstacle, 
Math. Ann. {\bf{372}} (2018), 915--949.


\bibitem{hishidapre}
Hishida, T., Decay estimates of gradient of 
a generalized Oseen evolution operator arising from 
time-dependent rigid motions in exterior domains, 
Arch. Rational Mech. Anal. {\bf 238} (2020), 215--254.


\bibitem{hish2009}
Hishida, T., and Shibata, Y., 
$L_p$-$L_q$ estimate of the Stokes operator and Navier-Stokes flows 
in the exterior of a rotating obstacle, Arch. Rational Mech. Anal. 
{\bf{193}} (2009), 339--421.

\bibitem{kato1984}


Kato, T., Strong $L^p$ solutions of the Navier-Stokes equation in $\R^m$, 
with applications to weak solutions, 
Math. Z. {\bf{187}} (1984), 471--480.



\bibitem{koba2017}
Koba, H., On $L^{3,\infty}$-stability of 
the Navier-Stokes system in exterior domains, J. Differential Equations 
{\bf{262}} (2017), 2618--2683.


\bibitem{kosh1998}
Kobayashi, T. and Shibata, Y., 
On the Oseen equation in the three dimensional exterior domains, 
Math. Ann. {\bf{310}} (1998), 1--45.









\bibitem{koya1998}
Kozono, H., and Yamazaki, M., On a larger class of 
stable solutions to the Navier-Stokes equations in 
exterior domains, Math. Z., {\bf{228}} (1998), 751--785.


\bibitem{miyakawa1982}
Miyakawa, T., On nonstationary solutions of the Navier-Stokes equations in an exterior domain, 
Hiroshima Math. J., {\bf{12}} (1982), 115--140.



\bibitem{siso1992}
Simader, C.G. and Sohr, H., A new approach to the Helmholtz decomposition and 
the Neumann problem in $L^q$-spaces for bounded and exterior domains, 
Mathematical Problems Relating to the Navier-Stokes Equations (eds. G.P. Galdi), 1--35, 
Ser. Adv. Math. Appl. Sci., {\bf{11}}, World Sci. Publ., River Edge, NJ, 1992.


\bibitem{yamazaki2000}
Yamazaki, M., The Navier-Stokes equations in the weak-$L^n$ space with 
time-dependent external force, 
Math. Ann., {\bf{317}} (2000), 635--675.















\end{thebibliography}
\end{document}